\documentclass{amsart}

\usepackage{txfonts}
\newcommand{\Hom}{\normalfont\mbox{Hom}\,}
\newcommand{\Ext}{\normalfont\mbox{Ext}\,}
\newcommand{\Tor}{\normalfont\mbox{Tor}\,}
\newcommand{\pd}{\normalfont\mbox{pd}\,}
\newcommand{\id}{\normalfont\mbox{id}\,}
\newcommand{\fd}{\normalfont\mbox{fd}\,}
\newcommand{\gd}{\mbox{Gdim}\,}
\newcommand{\wdim}{\normalfont\mbox{wdim}\,}
\newcommand{\FP}{\normalfont\mbox{FP}\,}
\newcommand{\IF}{\normalfont\mbox{IF}\,}
\newcommand{\IFD}{\normalfont\mbox{IFD}\,}
\newcommand{\gldim}{\normalfont\mbox{gldim}\,}
\newcommand{\wGgldim}{\normalfont\mbox{wGgldim}\,}
\newcommand{\Ggldim}{\normalfont\mbox{Ggldim}\,}
\newcommand{\Gpd}{\normalfont\mbox{Gpd}\,}
\newcommand{\Gid}{\normalfont\mbox{Gid}\,}
\newcommand{\Gfd}{\normalfont\mbox{Gfd}\,}
\newcommand{\im}{\normalfont\mbox{Im}\,}

\setcounter{section}{0}
\theoremstyle{plain}
\newtheorem{theorem}{Theorem}[section]
\newtheorem{lemma}[theorem]{Lemma}
\newtheorem{proposition}[theorem]{Proposition}
\newtheorem{corollary}[theorem]{Corollary}

\theoremstyle{definition}

\newtheorem{remark}[theorem]{Remark}
\newtheorem{example}[theorem]{Example}

\theoremstyle{Definition and Notation}

\catcode`\ç=13
\defç{\c{c}}
\catcode`\é=13
\defé{\'e}
\catcode`\à=13
\defà{\`a}
\catcode`\è=13
\defè{\`e}
\catcode`\â=13
\defâ{\^a}
\catcode`\ù=13
\defù{\`u}
\catcode`\ê=13
\defê{\^e}
\catcode`\î=13
\defî{\^\i}
\catcode`\ô=13
\defô{\^o}
\begin{document}

\title[]{Note on (weak) Gorenstein global dimensions}

\author{Najib Mahdou}
\address{Najib Mahdou\\Department of Mathematics, Faculty of Science and Technology of
Fez, Box 2202, University S.M. Ben Abdellah Fez, Morocco. \\
mahdou@hotmail.com }

\author{Mohammed Tamekkante}
\address{Mohammed Tamekkante\\Department of Mathematics, Faculty of Science and Technology
of Fez, Box 2202, University S.M. Ben Abdellah Fez, Morocco. \\
tamekkante@yahoo.fr }

\keywords{(Gorenstein) homological dimensions of modules and
rings}

\subjclass[2000]{13D05, 13D02}

\begin{abstract}
In this note we characterize the (resp., weak) Gorenstein global dimension for an
arbitrary ring. Also, we extend the well-known Hilbert's syzygy Theorem to the weak Gorenstein global dimension and  we study the weak Gorenstein
homological dimensions of direct product of rings, which gives examples
of non-coherent  rings of finite Gorenstein dimensions $>0$ and infinite classical
weak dimension.
\end{abstract}
\maketitle
\section{Introduction}

Throughout this paper, $R$ denotes -if
not specified otherwise- a non-trivial associative ring  and all
modules are unitary.\\
Let $R$ be a ring, and let $M$ be an $R$-module. As usual we use
$\pd_R(M)$, $\id_R(M)$ and $\fd_R(M)$ to denote, respectively, the
classical projective dimension, injective dimension and flat
dimension of $M$. We use also $\gldim(R)$ and $\wdim(R)$ to denote, respectively, the classical global and weak dimension of $R$.\\

For a two-sided Noetherian ring $R$, Auslander and Bridger
\cite{Aus bri} introduced the $G$-dimension, $\gd_R (M)$, for
every finitely generated $R$-module $M$. They showed that there is
an inequality $\gd_R (M)\leq \pd_R (M)$ for all finite $R$-modules
$M$, and equality holds if $\pd_R (M)$ is finite.

Several decades later, Enochs and Jenda \cite{Enochs,Enochs2}
defined the notion of Gorenstein projective dimension
($G$-projective dimension for short), as an extension of
$G$-dimension to modules which are not necessarily finitely
generated, and the Gorenstein injective dimension ($G$-injective
dimension for short) as a dual notion of Gorenstein projective
dimension. Then, to complete the analogy with the classical
homological dimension, Enochs, Jenda and Torrecillas \cite{Eno
Jenda Torrecillas} introduced the Gorenstein flat dimension. Some
references are
 \cite{Christensen, Christensen
and Frankild, Enochs, Enochs2, Eno Jenda Torrecillas, Holm}.\\

Recall that a left (resp., right) $R$-module $M$ is called
Gorenstein projective if, there exists an exact sequence of
projective left (resp., right) $R$-modules:
$$\mathbf{P}:...\rightarrow P_1\rightarrow P_0\rightarrow
P^0\rightarrow P^1\rightarrow ...$$ such that $M\cong
\im(P_0\rightarrow P^0)$ and such that the operator $\Hom_R(-,Q)$
leaves $\mathbf{P}$ exact whenever $Q$ is a left (resp., right)
projective $R$-module. The resolution $\mathbf{P}$ is called a complete projective resolution. \\
The left and right Gorenstein injective $R$-module are defined
dually.\\
 And an $R$-module $M$ is called left (resp., right)
Gorenstein flat if, there exists an exact sequence of flat left
(resp., right) $R$-modules:
$$\mathbf{F}:...\rightarrow F_1\rightarrow F_0\rightarrow
F^0\rightarrow F^1\rightarrow ...$$ such that $M\cong
\im(P_0\rightarrow P^0)$ and such that the operator $I\otimes_R-$
(resp., $-\otimes_RI$) leaves $F$ exact whenever $I$ is a right
(resp., left) injective $R$-module. The resolution $\mathbf{F}$ is called complete flat resolution.\\
Using the definitions, we immediately get the following
characterization of Gorenstein flat modules:
\begin{lemma}\label{lemma}
An $R$-module $M$ is Gorenstein left (resp., right) flat if, and only if,
\begin{enumerate}
  \item $\Tor^i_R(I,M)=0$ (resp.$\Tor^i_R(M,I)=0$), for every right (resp., left) injective $R$-module
$I$ and all $i>0$.
  \item There exists an exact sequence of left (resp, right) $R$-modules $0\rightarrow M \rightarrow F_0\rightarrow F_1\rightarrow...$ where each $F_i$ is  flat such that
the functor $I\otimes_R-$ (resp., $-\otimes_RI$) keeps the
exactness of this sequence whenever $I$ is right injective.
\end{enumerate}
\end{lemma}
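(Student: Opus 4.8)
The plan is to deduce both directions straight from the definition of a complete flat resolution, using only the standard fact that $\Tor^i_R(I,-)$ can be computed from an arbitrary flat resolution. I will argue the left-module case; the right-module case is symmetric, with $-\otimes_R I$ in place of $I\otimes_R -$ and ``left injective'' in place of ``right injective''.

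For the ``only if'' direction, suppose $M$ is Gorenstein flat with a complete flat resolution $\mathbf{F}:\cdots\to F_1\to F_0\to F^0\to F^1\to\cdots$ and $M\cong\im(F_0\to F^0)$. I would split $\mathbf{F}$ at $M$. The right piece $0\to M\to F^0\to F^1\to\cdots$ is exact, consists of flat modules, and stays exact under $I\otimes_R-$ for every injective right $R$-module $I$ because $I\otimes_R-$ keeps all of $\mathbf{F}$ exact; this is precisely condition (2). For condition (1), the complementary left piece $\cdots\to F_1\to F_0\to M\to 0$ is a flat resolution of $M$, so $\Tor^i_R(I,M)$ is the homology at the spot $I\otimes_R F_i$ of the complex $I\otimes_R\mathbf{F}$; the latter is exact, hence $\Tor^i_R(I,M)=0$ for all $i>0$.

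Conversely, assume (1) and (2). Choose any flat resolution $\cdots\to F_1\to F_0\to M\to 0$ and glue it to the flat coresolution $0\to M\to G^0\to G^1\to\cdots$ furnished by (2) along the common module $M$, obtaining an exact complex $\mathbf{F}:\cdots\to F_1\to F_0\to G^0\to G^1\to\cdots$ of flat $R$-modules with $M\cong\im(F_0\to G^0)$. What remains is to check that $I\otimes_R\mathbf{F}$ is exact for every injective right $R$-module $I$. Exactness at $I\otimes_R M$ and at each $I\otimes_R G^j$ is exactly hypothesis (2) (which also tells us $I\otimes_R M\to I\otimes_R G^0$ is injective), and exactness at $I\otimes_R F_i$ for $i\ge 1$ is the vanishing $\Tor^i_R(I,M)=0$ from hypothesis (1), computed again from the flat resolution $\cdots\to F_1\to F_0\to M\to 0$.

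The only slightly delicate spot is the splicing degree $I\otimes_R F_0$, and this is where I expect the minor bookkeeping to concentrate. Here I would use that $I\otimes_R F_1\to I\otimes_R F_0\to I\otimes_R M\to 0$ is exact by right-exactness of the tensor product (this is the computation of $\Tor^0$), so that $\im(I\otimes_R F_1\to I\otimes_R F_0)=\ker(I\otimes_R F_0\to I\otimes_R M)$; and since the map $F_0\to G^0$ in $\mathbf{F}$ factors as $F_0\twoheadrightarrow M\hookrightarrow G^0$ with $I\otimes_R M\to I\otimes_R G^0$ injective by (2), this kernel coincides with $\ker(I\otimes_R F_0\to I\otimes_R G^0)$. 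Hence $I\otimes_R\mathbf{F}$ is exact at $I\otimes_R F_0$ as well, so $\mathbf{F}$ is a complete flat resolution exhibiting $M$ as a Gorenstein flat module. Apart from this junction argument, both hypotheses plug in verbatim, so I do not anticipate any real obstacle.
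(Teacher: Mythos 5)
Your proof is correct: the paper offers no written argument (it states the lemma as following ``immediately from the definitions''), and your splitting of a complete flat resolution for the forward direction and splicing of a flat resolution onto the coresolution from (2) for the converse is exactly the standard verification the paper has in mind, with the one genuinely delicate point --- exactness of $I\otimes_R\mathbf{F}$ at the junction $I\otimes_R F_0$ --- handled properly via right-exactness of the tensor product and the injectivity of $I\otimes_R M\to I\otimes_R G^0$.
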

\begin{remark}\label{remark}
Using the Lemma above and an $n$-step flat resolution of left (resp., right) $R$-module $M$ we conclude that if $\Gfd_R(M)\leq n$ then $\Tor_R^i(I,M)=0$ (resp., $\Tor^i_R(M,I)=0$) for every right (resp., left) injective $R$-module $I$ and all $i>n$. The inverse implication is given by Holm (\cite[Theorem 3.14]{Holm}) when $\Gfd_R(M)<\infty$ and the ring is left (resp., right)   coherent.
\end{remark}
The  Gorenstein projective, injective and flat
dimensions are defined in term of resolution and denoted by $\Gpd(-)$, $\Gid(-)$ and $\Gfd(-)$ respectively (see \cite{Christensen, Enocks and janda, Holm}).\\

In \cite{Bennis and Mahdou2}, the authors prove the equality:
$$\sup\{\Gpd_R(M)\mid \text{M is a left  R-module}\}=\sup\{\Gid_R(M)\mid \text{M is a left  R-module}\}$$
They called the common value of the above quantities the left
Gorenstein global dimension of $R$ and denoted it by
$l.\Ggldim(R)$. Similarly, they set
$$l.\wGgldim(R)=\sup\{\Gfd_R(M)\mid \text{M is a left  R-module}\}$$
which they called the left weak Gorenstein global dimension of
$R$. Similarly with the right modules, we can define the right
Gorenstein global and weak dimensions; $r.\Ggldim(R)$ and $r.\wGgldim(R)$. When $R$ is a commutative ring, we drop the unneeded letters $r$ and $l$.\\
The Gorenstein global dimension measures how far away a ring $R$
is from being quasi-Frobenius (i.e; Noetherian and self injective
rings) (see \cite[Proposition 2.6]{Bennis and Mahdou2}). On the
other hand, from Faith-Walker Theorem \cite[Theorem
7.56]{Nicolson}, a ring is quasi-Frobenius if, and only if, every
injective right (resp., left) module is projective or equivalently
every projective right (resp., left) module is injective. Hence,
from \cite[Proposition 2.6]{Bennis and Mahdou2}, we have the
following Corollary:
 \begin{corollary}\label{Ggd(R)=0}
The following statements are equivalent:
\begin{enumerate}
  \item $l.\Ggldim(R)=0$.
  \item $r.\Ggldim(R)=0$.
  \item Every left (and right) projective  $R$-module is injective.
\end{enumerate}
\end{corollary}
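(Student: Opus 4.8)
The plan is to funnel both sides of the statement through the notion of a \emph{quasi-Frobenius} ring, exploiting the fact that this ring-theoretic property is left--right symmetric, and then to quote the two results already recalled in the introduction. First I would invoke \cite[Proposition 2.6]{Bennis and Mahdou2}, which asserts that $l.\Ggldim(R)=0$ if and only if $R$ is quasi-Frobenius, i.e. Noetherian and self-injective. Thus condition (1) is equivalent to ``$R$ is quasi-Frobenius''.

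Next, since the class of quasi-Frobenius rings does not depend on the chosen side (a ring is quasi-Frobenius as a left ring precisely when it is so as a right ring), the very same proposition applied to right $R$-modules gives that $r.\Ggldim(R)=0$ if and only if $R$ is quasi-Frobenius. Hence (1) $\Leftrightarrow$ (2), both being reformulations of ``$R$ is quasi-Frobenius''.

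Finally, I would apply the Faith--Walker theorem \cite[Theorem 7.56]{Nicolson}: $R$ is quasi-Frobenius if and only if every injective right (equivalently left) $R$-module is projective, equivalently every projective right (equivalently left) $R$-module is injective. Combining this with the previous two steps yields (3) $\Leftrightarrow$ ``$R$ is quasi-Frobenius'' $\Leftrightarrow$ (1), and because quasi-Frobenius is symmetric the clauses ``every left projective $R$-module is injective'' and ``every right projective $R$-module is injective'' hold simultaneously, which is exactly what licenses the two-sided phrasing of (3). Concatenating the chains of equivalences closes the loop (1) $\Leftrightarrow$ (2) $\Leftrightarrow$ (3).

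Since the argument is essentially a concatenation of two quoted theorems, I do not expect a genuine obstacle; the only point demanding care is the left--right symmetry. A priori $l.\Ggldim(R)=0$ is a one-sided hypothesis, so one must genuinely use that the intermediate condition ``quasi-Frobenius'' is independent of the side in order to pass from (1) to (2) and to the two-sided statement (3); without that observation the corollary would not follow formally from \cite[Proposition 2.6]{Bennis and Mahdou2} alone.
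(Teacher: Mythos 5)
Your proposal is correct and matches the paper's own argument: the corollary is deduced exactly by combining \cite[Proposition 2.6]{Bennis and Mahdou2} (which identifies $l.\Ggldim(R)=0$ with $R$ being quasi-Frobenius) with the Faith--Walker theorem \cite[Theorem 7.56]{Nicolson}, using the left--right symmetry of the quasi-Frobenius property. No further comment is needed.
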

For rings with high $l.\Ggldim(-)$, \cite[Lemma 2.1]{Bennis and Mahdou2} gives a nice characterization to $l.\Ggldim(R)$ for an arbitrary ring $R$ provided the finiteness of this dimension as  shown by the next Proposition:
\begin{proposition}[Lemma 2.1, \cite{Bennis and Mahdou2}]\label{lemma bennisMahdou}
If $l.\Ggldim(R)<\infty$, then the following statements are
equivalent:
\begin{enumerate}
  \item $l.\Ggldim(R)\leq n$.
  \item $\id_R(P)\leq  n$ for every left $R$-module $P$ with finite projective dimension.
\end{enumerate}
\end{proposition}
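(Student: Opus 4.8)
The plan is to prove the two implications separately, the essential tools being that $\Ext^{i}_R(G,-)$ vanishes for $i>0$ on every module of finite projective dimension when $G$ is Gorenstein projective, and Holm's description \cite{Holm} of finite Gorenstein projective dimension in terms of the vanishing of such $\Ext$ groups; both of these hold over an arbitrary ring, which is what makes the statement available at this level of generality.

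\emph{Proof that $(1)\Rightarrow(2)$.} Let $P$ be a left $R$-module with $\pd_R(P)<\infty$. First record that $\Ext^{i}_R(G,P)=0$ for all $i>0$ whenever $G$ is Gorenstein projective: for $P$ projective this is immediate from the definition of a complete projective resolution, and the general case follows by induction on $\pd_R(P)$ using the long exact $\Ext_R(G,-)$ sequence attached to a short exact sequence $0\to K\to Q\to P\to 0$ with $Q$ projective and $\pd_R(K)=\pd_R(P)-1$. Now fix an arbitrary left $R$-module $N$. Since $l.\Ggldim(R)\le n$ we have $\Gpd_R(N)\le n$, so there is an exact sequence $0\to G_n\to G_{n-1}\to\cdots\to G_0\to N\to0$ with every $G_j$ Gorenstein projective. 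Splitting it into short exact sequences and shifting dimension in the first variable of $\Ext_R(-,P)$, the vanishing recorded above at each $G_j$ yields $\Ext^{i+n}_R(N,P)\cong\Ext^{i}_R(G_n,P)=0$ for all $i>0$. Hence $\Ext^{j}_R(N,P)=0$ for all $j>n$, and since $N$ was arbitrary, $\id_R(P)\le n$.

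\emph{Proof that $(2)\Rightarrow(1)$.} By hypothesis $d:=l.\Ggldim(R)<\infty$, and since $l.\Ggldim(R)=\sup\{\Gpd_R(M)\mid M\text{ a left }R\text{-module}\}$ (\cite{Bennis and Mahdou2}) the supremum is attained, so there is a left $R$-module $M$ with $\Gpd_R(M)=d$. If $d=0$ then trivially $d\le n$. If $d\ge1$, Holm's characterization of finite Gorenstein projective dimension \cite{Holm} provides a projective left $R$-module $Q$ with $\Ext^{d}_R(M,Q)\ne0$. But $\pd_R(Q)=0<\infty$, so $(2)$ gives $\id_R(Q)\le n$, whence $\Ext^{i}_R(-,Q)=0$ for all $i>n$; therefore $d\le n$. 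In either case $l.\Ggldim(R)\le n$.

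The routine content is the two induction/dimension-shifting computations. The only non-formal ingredients are the two cited facts about Gorenstein projective modules — vanishing of higher $\Ext$ into modules of finite projective dimension, and Holm's detection of $\Gpd_R(M)$ by such $\Ext$ groups — and the point to get right is that each holds without any coherence or Noetherian hypothesis, so nothing beyond $l.\Ggldim(R)<\infty$ is used.
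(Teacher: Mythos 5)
Your proof is correct. Note that the paper itself does not prove this statement --- it is quoted verbatim as Lemma~2.1 of \cite{Bennis and Mahdou2} --- so there is no in-text argument to compare against; what you have written is essentially the standard (and, as far as I can tell, the original) argument via Holm's $\Ext$-characterization of finite Gorenstein projective dimension. Both halves check out: the dimension-shifting in $(1)\Rightarrow(2)$ is sound (and in fact you are re-deriving by hand the implication ``$\Gpd_R(M)\leq n$ implies $\Ext^i_R(M,L)=0$ for $i>n$ and $\pd_R(L)<\infty$'', which is already part of \cite[Theorem 2.20]{Holm}), and in $(2)\Rightarrow(1)$ your appeal to the supremum being attained is legitimate since the values $\Gpd_R(M)$ are non-negative integers bounded by $d$. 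A marginally more direct route for $(2)\Rightarrow(1)$ is to fix an arbitrary module $M$, note $\Gpd_R(M)\leq l.\Ggldim(R)<\infty$, and apply \cite[Theorem 2.20]{Holm} together with $(2)$ to get $\Gpd_R(M)\leq n$ outright, which avoids the extremal-module step; but your version is equally valid and uses exactly the same non-formal inputs.
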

There is a similar result of Corollary \ref{Ggd(R)=0} for the weak Gorenstein global dimension as shown by the bellow  Proposition. Recall that a ring is called  right (resp., left)  IF ring if   every right (resp., left) injective module is flat and it is called IF ring if it is both right and left IF ring (see please \cite{Ding}).
\begin{proposition}\label{lemma0}
The following statements are equivalent for every  ring $R$:
\begin{enumerate}
  \item $l.\wGgldim(R)=0$
  \item Every left and every right injective $R$-module is flat (i.e, $\IF$ ring).
  \item $r.\wGgldim(R)=0$
\end{enumerate}
\end{proposition}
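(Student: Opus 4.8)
The plan is to prove $(1) \Leftrightarrow (2)$; the equivalence $(2) \Leftrightarrow (3)$ then follows by running the same argument with the roles of left and right modules interchanged (note that $(2)$ is itself left–right symmetric, so no separate statement is needed). Throughout I use that, by the definition of Gorenstein flat dimension, the condition $l.\wGgldim(R)=0$ is exactly the assertion that every left $R$-module is Gorenstein flat (here $R\neq 0$ guarantees the supremum defining $l.\wGgldim(R)$ is a value $\geq 0$).

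For $(1) \Rightarrow (2)$ I would establish the two halves of the $\IF$ condition separately, via Lemma \ref{lemma}. Let $I$ be a right injective $R$-module. Every left $R$-module $M$ is Gorenstein flat, so condition $(1)$ of Lemma \ref{lemma} gives $\Tor^1_R(I,M)=0$ for all left $M$; hence $I$ is flat as a right $R$-module, so every right injective module is flat. Next let $E$ be a left injective $R$-module. Since $E$ is Gorenstein flat, condition $(2)$ of Lemma \ref{lemma} provides an exact sequence $0 \to E \to F_0 \to F_1 \to \cdots$ with every $F_i$ flat; the monomorphism $E \to F_0$ splits because $E$ is injective, so $E$ is a direct summand of the flat module $F_0$ and is therefore flat. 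Thus every left injective module is flat as well, which is $(2)$.

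For $(2) \Rightarrow (1)$ I would check the two conditions of Lemma \ref{lemma} for an arbitrary left $R$-module $M$. Condition $(1)$ holds because any right injective $R$-module $I$ is flat by hypothesis, whence $\Tor^i_R(I,M)=0$ for all $i>0$. For condition $(2)$, choose an injective coresolution $0 \to M \to E^0 \to E^1 \to \cdots$; each left injective $E^i$ is flat by hypothesis, so this is a coresolution of $M$ by flat modules, and since every right injective $I$ is flat the functor $I\otimes_R-$ is exact and preserves its exactness. Hence $M$ is Gorenstein flat, and as $M$ was arbitrary, $l.\wGgldim(R)=0$.

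The step needing the most care is deducing, in $(1)\Rightarrow(2)$, that left injective modules are flat: one is tempted to appeal to the right-handed hypothesis $r.\wGgldim(R)=0$, which is not yet in hand, so instead one must use the second defining property of Gorenstein flatness (the flat coresolution) together with the splitting of the embedding of an injective module — this is the only step in the argument that is not a formal consequence of Lemma \ref{lemma} and the exactness of tensoring with a flat module.
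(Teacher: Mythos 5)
Your argument is correct and follows essentially the same route as the paper's: right injectives are shown flat via the $\Tor$-vanishing of Lemma \ref{lemma}, left injectives via the splitting of the embedding into the first term of a flat coresolution, and the converse by splicing a flat resolution with an injective coresolution whose terms are flat. The only difference is organizational (you prove $1\Leftrightarrow 2$ and invoke the left–right symmetry of $(2)$, while the paper runs $1\Rightarrow 2\Rightarrow 3$ and declares the reverse implications similar), which does not change the substance.
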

\begin{proof} We prove the implications $(1\Rightarrow 2 \Rightarrow 3)$ and the inverse implications are similar.\\
$(1\Rightarrow 2$). Suppose that $l.\wGgldim(R)=0$. Let $I$ be right injective $R$-module. For an arbitrary left $R$-module $M$ and all
$i>0$ we have $\Tor_R^i(I,M)=0$ (see Lemma \ref{lemma}). Then, $I$ is flat. Moreover, since every left $R$-module is
Gorenstein flat (since $l.\wGgldim(R)=0$), every left $R$-module can be embedding  in a left flat $R$-module. In particular,
every left injective $R$-module is contained in a flat module. Then, every left injective $R$-module is a direct summand
of a flat module and so it is flat, as desired.\\
$(2\Rightarrow 3).$ Let $M$ be a right $R$-module. Assemble any
flat resolution  of $M$  with its any injective resolution, we get
an exact sequence of right flat   $R$-modules $\mathbf{F}$ (since
every right  injective module is flat). Also, since every left
injective module $I$ is flat, $\mathbf{F}\otimes_RI$ is exact and
so $\mathbf{F}$ is a complete flat resolution. This means that $M$
is Gorenstein flat. Consequently, $r.\wGgldim(R)=0$.
\end{proof}

The aim of this note is to give generalizations of Corollary
\ref{Ggd(R)=0} and Proposition \ref{lemma0} in the way of
Proposition \ref{lemma bennisMahdou} for an arbitrary ring with
high (weak) Gorenstein global dimension (see Theorem \ref{main
result1}, \ref{main result2'} and \ref{main result2}). Also, we
extend the well-known Hilbert's syzygy Theorem to the weak
Gorenstein global dimension and  we study the weak Gorenstein
homological dimension of direct product of rings\footnote{The
extension of the Hilbert's syzygy Theorem  and  the study the weak
Gorenstein homological dimensions of direct product of rings to
weak Gorenstein dimension was done in \cite{Bennis and Mahdou2}
over a coherent rings. There we  give a generalization to an
arbitrary ring.}, which gives examples of non-coherent  rings of
finite Gorenstein dimensions $>0$ and infinite classical
weak dimension.\\
Throughout the rest of this paper, all modules are-if not
specified otherwise- left $R$-modules. The definitions and
notations employed in this paper are based on those introduced by
Holm in \cite{Holm}.

\section{main results}
Our first main result of this paper is the following Theorem:

\begin{theorem}\label{main result1}
Let $R$ be a ring and $n$ a positive integer. Then, $l.\Ggldim(R)\leq n$ if, and only if, $R$  satisfies the following two conditions:
\begin{description}
  \item[(C1)] $\id_R(P)\leq n$ for every projective (left) $R$-module.
  \item[(C2)] $\pd_R(I)\leq n$ for every injective (left) $R$-module.
\end{description}
\end{theorem}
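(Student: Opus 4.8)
The plan is to prove the two implications separately, using Proposition \ref{lemma bennisMahdou} as the engine for the forward direction and building the finiteness of $l.\Ggldim(R)$ from scratch for the converse.

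For the forward direction, suppose $l.\Ggldim(R) \leq n$. Then in particular $l.\Ggldim(R) < \infty$, so Proposition \ref{lemma bennisMahdou} applies: condition (2) there says $\id_R(P) \leq n$ for every $R$-module $P$ of finite projective dimension, and in particular for every projective module, which gives (C1). For (C2), let $I$ be an injective $R$-module. Since $l.\Ggldim(R) = l.\wGgldim(R)$ agrees with the supremum of Gorenstein projective dimensions, we have $\Gpd_R(I) \leq n$; I would combine this with the fact that $\Gpd$ and $\pd$ interact well — specifically, a Gorenstein projective module of finite projective dimension is projective, and more generally one can check that $\id_R$ of a projective bounds how far $I$ is from being projective. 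Concretely: take a Gorenstein projective resolution $0 \to G_n \to \cdots \to G_0 \to I \to 0$; applying $\Hom_R(-,Q)$ for projective $Q$ and using that $\Ext^{>0}_R(G_i,Q)=0$ together with (C1) (which bounds $\Ext^{>n}_R(-,Q)=0$ since projectives are embedded in their injective envelopes… ) shows $\Ext^{>n}_R(I,Q)=0$ for all modules, hence $\pd_R(I)\le n$. Here one uses that every module has finite "Ext-vanishing dimension" against projectives equal to $\sup\{\id_R(P)\}$, a standard dimension-shift.

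For the converse, assume (C1) and (C2). The heart of the matter is to show $l.\Ggldim(R) \leq n$, and the key obstacle is that we cannot invoke Proposition \ref{lemma bennisMahdou} until we know the dimension is finite. So I would argue directly that every $R$-module $M$ has $\Gpd_R(M) \leq n$. Take a projective resolution and let $K$ be the $n$-th syzygy, so it suffices to show $K$ is Gorenstein projective. For this I use the characterization of Gorenstein projective modules via vanishing of $\Ext$ and the existence of a right resolution by projectives: condition (C1) gives $\Ext^{i}_R(K,Q)=0$ for $i>0$ and $Q$ projective (by a dimension shift against the $\id$ bound, since $\Ext^{n+i}_R(M,Q)=0$ once $\id_R Q \le n$), and condition (C2) is what lets me build the "right half" of a complete projective resolution: embed $K$ in its injective envelope $E$, note $\pd_R(E)\le n$ so after splicing finitely many steps one produces projective modules to the right while keeping $\Hom_R(-,Q)$-exactness. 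Iterating this construction assembles a complete projective resolution with $K$ in the appropriate spot.

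The main obstacle I anticipate is precisely this converse direction — specifically, the bookkeeping in constructing the complete projective resolution: one must simultaneously control $\Ext$-vanishing (to apply $\Hom_R(-,Q)$) and produce an acyclic complex of projectives stretching infinitely in both directions, and the natural approach of alternately using injective envelopes (whose finite projective dimension comes from (C2)) and projective covers/resolutions requires care to verify that the spliced complex remains $\Hom_R(-,\text{proj})$-exact at every stage. I would isolate this as a lemma: if (C1) and (C2) hold then every $n$-th syzygy is Gorenstein projective, proven by an explicit two-sided resolution argument. Once finiteness $l.\Ggldim(R)\le n$ is established this way, there is nothing left to check.
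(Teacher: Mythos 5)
Your converse direction is essentially the paper's argument: take the $n$-th syzygy $G$ of an arbitrary module, get $\Ext^{i>0}_R(G,Q)=0$ from \textbf{(C1)} by dimension shifting, and build the right half of a complete projective resolution using \textbf{(C2)}. One caveat: your specific recipe (embed $G$ in its injective envelope $E$ and ``splice'') does not directly produce a \emph{projective} next term, since $E$ itself only has $\pd_R(E)\le n$. The paper instead embeds $M$ (not $G$) into an injective $I$, takes an $n$-step projective resolution of the cokernel $M'$, and runs a horseshoe-type comparison so that the $n$-th syzygy $Q_1$ of $I$ --- which \emph{is} projective by \textbf{(C2)} --- sits in $0\to G\to Q_1\to G'\to 0$; the $\Hom_R(-,\mathrm{proj})$-exactness then comes from $\Ext^1_R(G',P)=\Ext^{n+1}_R(M',P)=0$. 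That is the bookkeeping you correctly anticipated as the crux, and it is worth writing out.

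The genuine gap is in your forward direction, in deriving \textbf{(C2)}. You attempt to get $\pd_R(I)\le n$ from $\Gpd_R(I)\le n$ together with \textbf{(C1)}. But the Ext-vanishing you can extract from $\Gpd_R(I)\le n$ (via \cite[Theorem 2.20]{Holm}) is $\Ext^{i}_R(I,L)=0$ for $i>n$ and $L$ of \emph{finite projective dimension} only; vanishing of $\Ext^{>n}_R(I,-)$ against projectives does not bound $\pd_R(I)$. (Over a quasi-Frobenius non-semisimple ring, every module $M$ has $\Ext^{>0}_R(M,Q)=0$ for all projective $Q$, yet not every module is projective.) Your sentence ``shows $\Ext^{>n}_R(I,Q)=0$ for all modules'' silently upgrades ``all projective $Q$'' to ``all modules,'' and that upgrade is exactly what is missing. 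The paper closes this by using the other half of the definition of $l.\Ggldim(R)$: since $\sup\{\Gid_R(M)\}\le n$, Holm's dual result \cite[Theorem 2.22]{Holm} gives $\Ext^{i}_R(I,M)=0$ for all $i>n$, all injective $I$, and \emph{all} modules $M$, which is precisely $\pd_R(I)\le n$. You never invoke the Gorenstein injective side, and without it (or an equivalent substitute) your proof of \textbf{(C2)} does not go through.
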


\begin{proof}
$(\Rightarrow).$ Suppose that $l.\Ggldim(R)\leq n$. We claim $(\mathbf{C1})$. Let $P$ be a projective  $R$-module. Since $\Gpd_R(M)\leq n$ for every  $R$-module $M$, we have $\Ext_R^i(M,P)=0$ for all $i>n$ (by \cite[Theorem 2.20]{Holm}). Hence, $\id_R(P)\leq n$, as desired.\\
Now, we claim $(\mathbf{C2})$. Let $I$ be an injective $R$-module. Since $l.\Ggldim(R)=\sup\{\Gid_R(M)\mid\text{M is a left  R-module}\}$, for an arbitrary  $R$-module $M$ we have $\Ext_R^i(I,M)=0$ for all $i>n$ (by \cite[Theorem 2.22]{Holm}). Hence, $\pd_R(I)\leq n$, as desired.\\
$(\Leftarrow).$ Suppose that $R$ satisfies $(\mathbf{C1})$ and
$(\mathbf{C2})$ and we claim that $l.\Ggldim(R)\leq n$. Let $M$ be
an arbitrary  $R$ module and consider an $n$-step  projective
resolution of $M$ as follows:
$$0\rightarrow G\rightarrow P_n\rightarrow ....\rightarrow P_1\rightarrow M \rightarrow 0$$ where all $P_i$ are projective. We have to prove that $G$ is
Gorenstein projective. First, for every  projective $R$-module $P$
and  all $i>0$, we have  $\Ext_R^i(G,P)=\Ext_R^{n+i}(M,P)=0$ by
condition $(\mathbf{C1})$. So, from \cite[Proposition 2.3]{Holm},
it suffices to prove that $G$ admits a right co-proper projective
resolution (see \cite[Definition 1.5]{Holm}). Pick a short exact
sequence $0\rightarrow M \rightarrow I\rightarrow M'\rightarrow 0$
where $I$ is an  injective $R$-module and for $M'$ consider an
$n$-step projective resolution as follows:
 $$0\rightarrow G'\rightarrow P_n'\rightarrow ....\rightarrow P_1'\rightarrow M' \rightarrow 0$$
 We have the following diagram:
 $$\begin{array}{ccccccccc}
 & 0 &  &  0 & & 0 &  & 0 &  \\
    & \downarrow & & \downarrow &  &\downarrow &  & \downarrow &  \\
   0\rightarrow & G & \rightarrow & P_n & \rightarrow ...\rightarrow  & P_1 & \rightarrow & M  & \rightarrow 0 \\
   & \downarrow & & \downarrow &  &\downarrow &  & \downarrow &  \\
   0\rightarrow & Q_1 & \rightarrow & P_n\oplus P_n' & \rightarrow ...\rightarrow  & P_1\oplus P_1' & \rightarrow & I  & \rightarrow 0 \\
   & \downarrow & & \downarrow &  &\downarrow &  & \downarrow &  \\
   0\rightarrow & G' & \rightarrow & P_n' & \rightarrow ...\rightarrow  & P_1' & \rightarrow & M'  & \rightarrow 0 \\
   & \downarrow & & \downarrow &  &\downarrow &  & \downarrow &  \\
 & 0 &  &  0 & & 0 &  & 0 &
 \end{array}$$
 Since $\pd_R(I)\leq n$ (by $(\mathbf{C2})$), the module $Q_1$ is clearly projective. On the other hand, we have
 $\Ext_R(G',P)=\Ext_R^{n+1}(M',P)=0$ for every projective module $P$ by $(\mathbf{C1})$. Thus, the functor $\Hom_R(-,P)$
 keeps
 the exactness of the short exact sequence $0\rightarrow G \rightarrow Q_1 \rightarrow G' \rightarrow 0$.
 By repeating this procedure we obtain a right projective resolution $$0\rightarrow G \rightarrow Q_1 \rightarrow Q_2 \rightarrow ...$$
 such that $\Hom_R(-,P)$ leaves this sequence exact whenever $P$ is projective. Hence, $G$ is Gorenstein projective.
 Consequently, $\Gpd_R(M)\leq n$ and so $l.\Ggldim(R)\leq n$, as desired.
\end{proof}
If we denote $l.\mathcal{P}(R)$ (resp., $r.\mathcal{P}(R)$) and
$l.\mathcal{I}(R)$ (resp., $r.\mathcal{I}(R)$) , respectively, the
set of all left (resp., right)  projective and injective
$R$-modules, we have:
\begin{eqnarray*}
l.\Ggldim(R) &=& \sup\{\pd_R(I),\id_R(P) \mid I\in l.\mathcal{I}(R), P\in l.\mathcal{P}(R)\} \; \text{and}  \\
  r.\Ggldim(R) &=&  \sup\{\pd_R(I),\id_R(P) \mid I\in r.\mathcal{I}(R), P\in r.\mathcal{P}(R)\}.
\end{eqnarray*}
 There is another way to write the above Theorem:
 \begin{corollary}
 Let $R$ be a ring and $n$ be a positive integer. The following statements are equivalent:
 \begin{enumerate}
   \item $l.\Ggldim(R)\leq n$.
   \item For any $R$-module $M$: $\pd_R(M)\leq n\Leftrightarrow \id_R(M)\leq n$.
 \end{enumerate}
 \end{corollary}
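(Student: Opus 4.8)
The plan is to read this off Theorem~\ref{main result1}. The implication $(2)\Rightarrow(1)$ is the quick half: I would apply condition~(2) to the two extreme classes of modules. Every projective $R$-module $P$ has $\pd_R(P)=0\le n$, so~(2) forces $\id_R(P)\le n$, which is exactly $(\mathbf{C1})$; dually, every injective $R$-module $I$ has $\id_R(I)=0\le n$, so~(2) forces $\pd_R(I)\le n$, which is $(\mathbf{C2})$. Theorem~\ref{main result1} then gives $l.\Ggldim(R)\le n$.

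For $(1)\Rightarrow(2)$, assume $l.\Ggldim(R)\le n$; by Theorem~\ref{main result1} both $(\mathbf{C1})$ and $(\mathbf{C2})$ hold. If $\pd_R(M)\le n$ then $\pd_R(M)$ is finite, and since $l.\Ggldim(R)\le n<\infty$, Proposition~\ref{lemma bennisMahdou} immediately gives $\id_R(M)\le n$. For the reverse passage, suppose $\id_R(M)\le n$, so there is an exact sequence $0\to M\to I^0\to\cdots\to I^n\to 0$ with every $I^j$ injective; by $(\mathbf{C2})$ each $\pd_R(I^j)\le n$. Splitting this into short exact sequences $0\to Z^j\to I^j\to Z^{j+1}\to 0$ for $j=0,\dots,n-1$ (with $Z^0=M$ and $Z^n=I^n$) and running a downward induction on $j$ with the standard estimate $\pd_R(Z^j)\le\max\{\pd_R(I^j),\,\pd_R(Z^{j+1})-1\}$ (read off the long exact $\Ext$ sequence) yields $\pd_R(Z^j)\le n$ for all $j$, hence $\pd_R(M)\le n$. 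Alternatively, one may avoid Proposition~\ref{lemma bennisMahdou} altogether and handle the first half by the dual dimension-shifting argument applied to a length-$n$ projective resolution of $M$ together with $(\mathbf{C1})$.

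The statement is then immediate, and the only point that is not purely formal is the reverse passage $\id_R(M)\le n\Rightarrow\pd_R(M)\le n$: Proposition~\ref{lemma bennisMahdou} is recorded only in the form ``finite projective dimension $\Rightarrow$ injective dimension $\le n$'', so its mirror image ``finite injective dimension $\Rightarrow$ projective dimension $\le n$'' must be produced here by hand from condition $(\mathbf{C2})$. Everything else is bookkeeping with short exact sequences and the definitions of $\pd_R$ and $\id_R$.
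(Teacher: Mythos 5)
Your proof is correct and follows essentially the same route as the paper: $(2)\Rightarrow(1)$ by checking $(\mathbf{C1})$ and $(\mathbf{C2})$ on projectives and injectives, and $(1)\Rightarrow(2)$ by dimension-shifting along a length-$n$ resolution using the bounds from Theorem~\ref{main result1} (the paper does the $\pd\Rightarrow\id$ half exactly by the ``alternative'' you mention, and dismisses the $\id\Rightarrow\pd$ half with ``similarly''). Your invocation of Proposition~\ref{lemma bennisMahdou} for the first half is a legitimate shortcut but changes nothing of substance.
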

 \begin{proof}
$(1\Rightarrow 2).$ Let $M$ be an $R$-module such that $\pd_R(M)\leq n$. For such module, consider a projective resolution as follows:
$$0\rightarrow P_n\rightarrow ...\rightarrow P_1\rightarrow P_0\rightarrow M\rightarrow 0$$
From Theorem \ref{main result1}, $\id_R(P_i) \leq n$ for each $i=0,..,n$. Hence,  $\id_R(M)\leq n$. Similarly we prove that $\pd_R(M)\leq n$ for every $R$-module such that $\id_R(M)\leq n$.\\
$(2\Rightarrow 1).$ Follows directly from Theorem \ref{main
result1} since  the conditions $\mathbf{C1}$ and $\mathbf{C2}$ are
clearly satisfied.
\end{proof}

 \begin{proposition}
Let $R$ be a ring with finite Gorenstein global dimension. Then,
$(\mathbf{C1})$ and $(\mathbf{C2})$ of Theorem  \ref{main result1}
are equivalent and so the following statements are equivalent:
\begin{enumerate}
  \item $l.\Ggldim(R)\leq n$.
  \item $\id_R(P)\leq n$ for every  projective $R$-module.
  \item $\pd_R(I)\leq n$ for every  injective $R$-module.
\end{enumerate}
\end{proposition}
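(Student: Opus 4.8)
The plan is to reduce everything to Theorem \ref{main result1}. Since condition (2) of the Proposition is exactly $(\mathbf{C1})$ and condition (3) is exactly $(\mathbf{C2})$, it suffices to prove, under the standing hypothesis $l.\Ggldim(R)<\infty$, the two implications $(\mathbf{C1})\Rightarrow l.\Ggldim(R)\leq n$ and $(\mathbf{C2})\Rightarrow l.\Ggldim(R)\leq n$; the reverse implications $l.\Ggldim(R)\leq n\Rightarrow(\mathbf{C1}),(\mathbf{C2})$ are already supplied by Theorem \ref{main result1}, and combining the four implications yields both $(\mathbf{C1})\Leftrightarrow(\mathbf{C2})$ and $(1)\Leftrightarrow(2)\Leftrightarrow(3)$.

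For $(\mathbf{C1})\Rightarrow l.\Ggldim(R)\leq n$, I would take an arbitrary $R$-module $M$; since $l.\Ggldim(R)=\sup\{\Gpd_R(N)\}$ is finite we have $\Gpd_R(M)<\infty$, so \cite[Theorem 2.20]{Holm} identifies $\Gpd_R(M)$ with $\sup\{i\geq 0\mid \Ext_R^i(M,Q)\neq 0\text{ for some projective }Q\}$. By $(\mathbf{C1})$ every projective $Q$ has $\id_R(Q)\leq n$, whence $\Ext_R^i(M,Q)=0$ for all $i>n$, so $\Gpd_R(M)\leq n$; as $M$ is arbitrary, $l.\Ggldim(R)\leq n$. (One may instead note that $(\mathbf{C1})$ forces $\id_R(N)\leq n$ for every $N$ of finite projective dimension by an immediate dimension shift along a finite projective resolution, and then quote Proposition \ref{lemma bennisMahdou}.) The implication $(\mathbf{C2})\Rightarrow l.\Ggldim(R)\leq n$ is the mirror image: for arbitrary $M$ we have $\Gid_R(M)<\infty$ because $l.\Ggldim(R)=\sup\{\Gid_R(N)\}$, so \cite[Theorem 2.22]{Holm} gives $\Gid_R(M)=\sup\{i\geq 0\mid \Ext_R^i(E,M)\neq 0\text{ for some injective }E\}$; by $(\mathbf{C2})$ every injective $E$ has $\pd_R(E)\leq n$, so $\Ext_R^i(E,M)=0$ for $i>n$ and therefore $\Gid_R(M)\leq n$, giving $l.\Ggldim(R)\leq n$.

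The only real subtlety — and the step I would treat as the main obstacle — is the legitimacy of running Holm's dimension formulas in the direction that concludes a dimension is \emph{small}: the equalities from \cite[Theorem 2.20]{Holm} and \cite[Theorem 2.22]{Holm} hold only once one already knows $\Gpd_R(M)<\infty$, respectively $\Gid_R(M)<\infty$. This is exactly what the hypothesis $l.\Ggldim(R)<\infty$ buys us, via the two descriptions of $l.\Ggldim(R)$ as a supremum of Gorenstein projective (respectively injective) dimensions; dropping finiteness would break the argument, so the hypothesis is genuinely used.
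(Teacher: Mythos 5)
Your proof is correct and follows essentially the same route as the paper: both arguments use the finiteness hypothesis to legitimize Holm's Ext-characterizations of $\Gpd$ and $\Gid$ (\cite[Theorems 2.20 and 2.22]{Holm}), deduce $l.\Ggldim(R)\leq n$ from $(\mathbf{C1})$ (respectively $(\mathbf{C2})$), and then recover the other condition from Theorem \ref{main result1}. Your explicit remark on why the finiteness hypothesis is indispensable is exactly the point the paper leaves implicit.
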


\begin{proof}
From Theorem \ref{main result1}, only the equivalence of
$(\mathbf{C1})$ and $(\mathbf{C2})$ need a proof. So, we prove
$(\mathbf{C1}\Rightarrow \mathbf{C2})$ and the other implication
is analogous.  Let $M$ be an arbitrary left $R$-module. For every
projective $R$-module $P$ and  all $i>n$, we have
$\Ext_R^i(M,P)=0$ (by $(\mathbf{C1})$). Then, from \cite[Theorem
2.20]{Holm}, $\Gpd_R(M)\leq n$. Hence, we have $l.\Ggldim(R)\leq
n$. So, by Theorem \ref{main result1}, $(\mathbf{C2})$ is
satisfied, as desired.
\end{proof}
 Our second main result of this paper is given by the bellow Theorem. Recall that over a  ring $R$,  Ding (\cite{Ding}) defined and investigated two global dimensions as follows:
 \begin{eqnarray*}
   r.\IFD(R) &=& \sup\{\fd_R(I)\mid \text{I is a right injective R-module}\}  \\
   l.\IFD(R) &=& \sup\{\fd_R(I)\mid \text{I is a left injective R-module}\}
 \end{eqnarray*}
For such dimensions, in \cite{Ding}, Ding gave a several
characterizations over an arbitrary ring and also over a coherent
ring. Recall also that a right (resp., left)  $R$-module $M$  is
called $\FP$-injective (or absolutely pure) if $\Ext_R(N,M)=0$ (or
equivalently $\Ext_R^i(N,M)=0$ for all $i>0$) for every finitely
presented right (resp. left)  $R$-module $N$. The $\FP$-injective
dimension of right (resp., left) $M$, denoted $\FP-\id_R(M)$,  is
defined to be the lest nonnegative integer $n$ such that
$\Ext_R^{n+1}(N,M)=0$ for every finitely presented right (resp.,
left) $R$-module (see \cite{Ding, Stenstrom}).
 \begin{theorem}\label{main result2'}
Let $R$ be a ring and $n$ a positive integer. The following
conditions are equivalent:
\begin{enumerate}
  \item $\sup\{l.\wGgldim(R),r.\wGgldim(R)\}\leq n$.
  \item $\Gfd_R(R/I)\leq n$ for every left and every right  ideal $I$.
  \item $\fd_R(E)\leq n$ for every left and every right injective $R$-module $E$.
\end{enumerate}
Consequently:
\begin{eqnarray*}
  \sup\{l.\wGgldim(R),r.\wGgldim(R)\} &=& \sup\{\fd_R(I)\mid I\in l.\mathcal{I}(R)\cup r.\mathcal{I}(R)\} \\
   &=&   \sup\{l.\IFD(R),r.\IFD(R)\}
\end{eqnarray*}
\end{theorem}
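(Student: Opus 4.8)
The plan is to prove the cycle of implications $(1 \Rightarrow 3 \Rightarrow 2 \Rightarrow 1)$, since $(3)$ and $(2)$ are symmetric in left/right and should follow from $(1)$ by parallel arguments. Note first that $(1 \Rightarrow 3)$ is already essentially contained in Remark~\ref{remark}: if $l.\wGgldim(R) \le n$, then for every left $R$-module $M$ we have $\Gfd_R(M) \le n$, hence $\Tor^i_R(I,M) = 0$ for every right injective $I$ and all $i > n$; taking $M$ to range over all left modules (or just using the flat resolution characterization) gives $\fd_R(I) \le n$, and symmetrically on the other side. So the first implication is short. For $(3 \Rightarrow 2)$: given a left ideal $I$, the short exact sequence $0 \to I \to R \to R/I \to 0$ reduces the question to showing $\Gfd_R$ is controlled; but more directly, if every left and right injective module has flat dimension $\le n$, I would like to show every module is ``$n$-Gorenstein flat'', which is exactly what $(3 \Rightarrow 1)$ needs, so it is cleaner to prove $(3 \Rightarrow 1)$ directly and get $(2)$ as a special case (or derive $(2)$ from $(1)$).

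The substantive direction is $(3 \Rightarrow 1)$. The strategy mirrors the $(\Leftarrow)$ half of Theorem~\ref{main result1}, but with flat resolutions and the tensor functor in place of projective resolutions and $\Hom$. Take an arbitrary left $R$-module $M$ and an $n$-step flat resolution $0 \to G \to F_n \to \cdots \to F_1 \to M \to 0$. Using condition $(3)$ and dimension shifting, $\Tor^i_R(I,G) = \Tor^{n+i}_R(I,M) = 0$ for every right injective $I$ and all $i > 0$, so $G$ satisfies condition~(1) of Lemma~\ref{lemma}. It remains to build, for $G$, an exact sequence $0 \to G \to F^0 \to F^1 \to \cdots$ of flat modules that stays exact under $I \otimes_R -$ for all right injective $I$. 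To do this I would embed $M$ in an injective left $R$-module, $0 \to M \to E \to M' \to 0$; since $\fd_R(E) \le n$ by $(3)$, splicing an $n$-step flat resolution of $M'$ with that of $M$ produces (via the horseshoe/$3\times 3$ diagram exactly as in Theorem~\ref{main result1}) a short exact sequence $0 \to G \to Q_1 \to G' \to 0$ with $Q_1$ flat and $G'$ again an $n$-th flat syzygy of a module; the key point is that this sequence remains exact after applying $I \otimes_R -$ because $\Tor^1_R(I,G') = \Tor^{n+1}_R(I,M') = 0$ for every right injective $I$. Iterating yields the desired flat co-resolution, so $G$ is Gorenstein flat, $\Gfd_R(M) \le n$, and hence $l.\wGgldim(R) \le n$; the right-hand version is identical.

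For the ``Consequently'' part: the equivalence of $(1)$ and $(3)$ gives
\[
\sup\{l.\wGgldim(R), r.\wGgldim(R)\} \le \sup\{\fd_R(I) \mid I \in l.\mathcal{I}(R) \cup r.\mathcal{I}(R)\},
\]
and the reverse inequality is immediate since each injective module $I$ satisfies $\fd_R(I) \le \Gfd_R(I) \le l.\wGgldim(R)$ (or the right analogue). The last equality is just the definition of $l.\IFD(R)$ and $r.\IFD(R)$ unwound. I expect the main obstacle to be verifying carefully that the $3 \times 3$ diagram can be assembled so that the middle row is a genuine flat resolution of the injective module $E$ with flat kernel $Q_1$ at the $n$-th spot --- this uses $\fd_R(E) \le n$ in an essential way --- and confirming that the tensor functor exactness at each stage follows from the vanishing of the appropriate $\Tor$, which in turn relies on the dimension-shifting identities being valid for the two-sided (left and right simultaneously) hypothesis; there is no coherence assumption here, so one must be sure not to invoke Holm's coherent-ring results (Remark~\ref{remark}) in the wrong direction.
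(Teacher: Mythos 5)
Your treatment of the substantive implication $(3 \Rightarrow 1)$ and of $(1 \Rightarrow 3)$ matches the paper's argument (the paper takes an $n$-step projective rather than flat resolution of $M$, which is immaterial), and the horseshoe/$3\times 3$ construction with the two $\Tor$-vanishing checks is exactly what the paper does. However, there is a genuine structural gap: you never prove that condition $(2)$ implies either $(1)$ or $(3)$. You only observe that $(2)$ follows from $(1)$ ``as a special case,'' which is the trivial direction; to establish the three-way equivalence you must take $\Gfd_R(R/I)\leq n$ for cyclic modules as a \emph{hypothesis} and deduce one of the other conditions. The paper closes the cycle as $(1\Rightarrow 2\Rightarrow 3\Rightarrow 1)$, where $(2\Rightarrow 3)$ is the step you are missing: from $\Gfd_R(R/I)\leq n$ for every right ideal $I$, Remark \ref{remark} gives $\Tor^i_R(R/I,E)=0$ for every left injective $E$ and all $i>n$, and then the standard criterion that flat dimension is detected by $\Tor$ against the cyclic modules $R/I$ (\cite[Lemma 9.18]{Rotman}) yields $\fd_R(E)\leq n$; symmetrically for right injectives. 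This step is short, but it is the entire content of including $(2)$ in the statement.

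A second, smaller error: in the ``Consequently'' part you justify $\fd_R(I)\leq l.\wGgldim(R)$ for injective $I$ by the chain $\fd_R(I)\leq \Gfd_R(I)\leq l.\wGgldim(R)$, but the first inequality is backwards --- in general $\Gfd_R(M)\leq\fd_R(M)$, and recovering $\fd$ from $\Gfd$ for injectives is exactly the kind of statement that needs coherence (compare Proposition \ref{propo right coherent}). The bound you need does hold, but it comes from the implication $(1\Rightarrow 3)$ you have already proved (equivalently from Remark \ref{remark}), not from comparing $\fd$ with $\Gfd$; note also that $l.\wGgldim(R)$ alone only controls the flat dimension of \emph{right} injectives, which is why the supremum over both sides appears in the statement (see Example \ref{example0}).
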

\begin{proof}
$(1\Rightarrow 2).$  Obvious by  definition of the left and right weak Gorenstein global dimension.\\
$(2\Rightarrow 3).$ Let $E$ be a left  injective $R$-module. Since $\Gfd_R(R/I)\leq n$ for every right  ideal $I$ and  from Remark \ref{remark}, we get $\Tor^i_R(R/I,E)=0$ for all $i>n$. Hence, from \cite[Lemma 9.18]{Rotman}, $\fd_R(E)\leq n$. Similarly we prove that $\fd_R(E)\leq n$ for every right injective $R$-module. \\
$(3\Rightarrow 1).$ Let $M$ be an arbitrary  left $R$-module and
consider an $n$-step projective resolution of $M$ as follows:
$$0\rightarrow G\rightarrow P_n\rightarrow ....\rightarrow P_1\rightarrow M \rightarrow 0$$ where all $P_i$ are left projective.
We have to prove that $G$ is a Gorenstein flat $R$-module. First,
for every  right injective  $R$-module $E$ we have
$\Tor_R^i(E,G)=\Tor_R^{n+i}(E,M)=0$ for all $i>0$ since
$\fd_R(E)\leq n$ (by hypothesis).\\ Now,  Pick a short exact
sequence of left $R$-modules $0\rightarrow M \rightarrow
E_1\rightarrow M'\rightarrow 0$ where $E_1$ is an  injective
$R$-module, and for $M'$ consider an $n$-step projective
resolution as follows:
 $$0\rightarrow G'\rightarrow P_n'\rightarrow ....\rightarrow P_1'\rightarrow M' \rightarrow 0$$
 We have the following diagram:
 $$\begin{array}{ccccccccc}
 & 0 &  &  0 & & 0 &  & 0 &  \\
    & \downarrow & & \downarrow &  &\downarrow &  & \downarrow &  \\
   0\rightarrow & G & \rightarrow & P_n & \rightarrow ...\rightarrow  & P_1 & \rightarrow & M  & \rightarrow 0 \\
   & \downarrow & & \downarrow &  &\downarrow &  & \downarrow &  \\
   0\rightarrow & F_1 & \rightarrow & P_n\oplus P_n' & \rightarrow ...\rightarrow  & P_1\oplus P_1' & \rightarrow & E_1  & \rightarrow 0 \\
   & \downarrow & & \downarrow &  &\downarrow &  & \downarrow &  \\
   0\rightarrow & G' & \rightarrow & P_n' & \rightarrow ...\rightarrow  & P_1' & \rightarrow & M'  & \rightarrow 0 \\
   & \downarrow & & \downarrow &  &\downarrow &  & \downarrow &  \\
 & 0 &  &  0 & & 0 &  & 0 &
 \end{array}$$
 Since $\fd_R(E_1)\leq n$, the module $F_1$ is clearly left flat. On the other hand, we have
 $\Tor_R^1(E,G')=\Tor_R^{n+1}(E,M')=0$ for every right injective  $R$-module $E$ (since $\fd_R(E)\leq n$).
 Thus, the functor $E\otimes_R-$  keeps the exactness of the short exact sequence $0\rightarrow G \rightarrow F_1 \rightarrow G' \rightarrow 0$.
 By repeating this procedure we obtain a  flat resolution of $G$ as follows: $$0\rightarrow G \rightarrow F_1 \rightarrow F_2 \rightarrow ...$$ such that $E\otimes_R-$ leaves this sequence exact whenever $E$ is right injective. Hence, from Lemma \ref{lemma}, $G$ is left Gorenstein flat.  Then, $\Gfd_R(M)\leq n$. Consequently, $l.\wGgldim(R)\leq n$, as desired.\\
 Similarly, we prove that $r.\wGgldim(R)\leq n$.
\end{proof}
 It is true that $l.\wGgldim(R)\leq n$ implies that $\fd_R(I)\leq n$ for every right injective $R$-module (by Remark \ref{remark}).
 But the inverse implication is not true in the general case as shown in the next Example. Thus explicates the form of Theorem  \ref{main result2'}.
\begin{example}\label{example0}
Let $R$ be a left and right coherent ring $R$ which is right IF but not left IF (see \cite[Example 2]{colby}). Then, $l.\wGgldim(R)=r.\wGgldim(R)=\infty$.
\end{example}

\begin{proof}
If $l.\wGgldim(R)<\infty$, then using \cite[Theorem 3.14]{Holm} and since every right injective $R$-module is flat (since $R$ is right IF ring), we have $\Gfd_R(M)=0$ for every left $R$-module $M$ and so $l.\wGgldim(R)=0$. So from Proposition  \ref{lemma0} every left injective module is flat. But, this contradicts the fact that $R$ is not left IF.
 Now, if $r.\wGgldim(R)=n<\infty$. Then, $\fd_R(E)\leq n$ for every left injective $R$-module $E$. On the other hand,  $\fd_R(E')=0\leq n$ for every right injective $R$-module $E'$ since $R$ is right $IF$. Then, from Theorem \ref{main result2'}, $\sup\{l.\wGgldim(R),r.\wGgldim(R)\}\leq n$. Absurd, since $l.\wGgldim(R)=\infty$.
\end{proof}

Over a right  coherent ring, the characterization of
$l.\wGgldim(R)$ is more simple as shown in the next Proposition:
\begin{proposition}\label{propo right coherent}
Let $R$ be a right coherent ring. Then, $$l.\wGgldim(R)=\sup\{l.\IFD(R),r.\IFD(R)\}$$

\end{proposition}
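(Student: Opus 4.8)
The plan is to squeeze $l.\wGgldim(R)$ between $\sup\{l.\IFD(R),r.\IFD(R)\}$ and itself. The inequality $l.\wGgldim(R)\le\sup\{l.\IFD(R),r.\IFD(R)\}$ is free: one has $l.\wGgldim(R)\le\sup\{l.\wGgldim(R),r.\wGgldim(R)\}$, and the right--hand side equals $\sup\{l.\IFD(R),r.\IFD(R)\}$ by Theorem \ref{main result2'}. For the reverse inequality set $n=l.\wGgldim(R)$; by that same Theorem we may assume $n<\infty$, and it then suffices to prove $r.\IFD(R)\le n$ and $l.\IFD(R)\le n$. The first holds over any ring: for a right injective $E$ and an arbitrary left $R$-module $M$ we have $\Gfd_R(M)\le n$, hence $\Tor^i_R(E,M)=0$ for $i>n$ by Remark \ref{remark}, so $\fd_R(E)\le n$ by \cite[Lemma 9.18]{Rotman}. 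Thus everything reduces to proving $l.\IFD(R)\le n$, and this is the only place where right coherence is used.

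To handle $l.\IFD(R)\le n$ I would first rewrite it purely in terms of right modules. Fix a left injective $E$; by \cite[Lemma 9.18]{Rotman} it is enough to show $\Tor^R_{n+1}(R/J,E)=0$ for every finitely generated right ideal $J$. Since $R$ is right coherent, $R/J$ is finitely presented over $R^{op}$, so it admits a resolution $Q_\bullet\to R/J$ by finitely generated projective right modules; dualizing it into $R$ produces a cochain complex $Q^{\ast}_\bullet=\Hom_{R^{op}}(Q_\bullet,R)$ of finitely generated projective left modules with $H^i(Q^{\ast}_\bullet)=\Ext^i_{R^{op}}(R/J,R)$, and the natural isomorphisms $Q_j\otimes_R E\cong\Hom_R(Q^{\ast}_j,E)$ together with the exactness of $\Hom_R(-,E)$ (as $E$ is injective) give
$$\Tor^R_{i}(R/J,E)\ \cong\ \Hom_R\bigl(\Ext^i_{R^{op}}(R/J,R),\,E\bigr)\qquad(i\ge 0).$$
Letting $E$ run over all left injective modules — for a fixed $J$ one may simply take $E$ to be an injective envelope of $\Ext^{n+1}_{R^{op}}(R/J,R)$ — we see that $l.\IFD(R)\le n$ is equivalent to $\Ext^{n+1}_{R^{op}}(R/J,R)=0$ for all finitely generated right ideals $J$, i.e. to $\FPid_{R^{op}}(R)\le n$. (In passing this reproves that $l.\IFD(R)=\FPid_{R^{op}}(R)$ for $R$ right coherent, in the spirit of \cite{Ding}.)

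So the proposition comes down to the single inequality $\FPid_{R^{op}}(R)\le l.\wGgldim(R)$ over a right coherent ring, and this is the step I expect to be the genuine obstacle. Dually it asks that the left injective character module $R^{+}=\Hom_{\mathbb Z}(R,\mathbb Q/\mathbb Z)$ satisfy $\fd_R(R^{+})\le n$; since $\Gfd_R(R^{+})\le n$ holds automatically, what must be proved is that the $n$-th flat (equivalently projective) syzygy of $R^{+}$, which is necessarily Gorenstein flat, is in fact flat. Here I would pass to the dual once more: a character module of a left Gorenstein flat module is right Gorenstein injective, so $\Gfd_R(R^{+})\le n$ already forces $\Gid_{R^{op}}(R^{++})\le n$, while $R\hookrightarrow R^{++}$ is a pure submodule as a right module; combining this with right coherence — which both makes flatness of $R^{+}$ detectable on the finitely presented modules $R/J$ and keeps the relevant $\Ext$'s and $\Tor$'s of finitely presented modules well behaved — and feeding in the bound $r.\IFD(R)\le n$ already obtained (through the right--module form of \cite[Theorem 3.14]{Holm}, cf. Remark \ref{remark}, applied to the $R/J$'s), one extracts the vanishing $\Ext^{n+1}_{R^{op}}(R/J,R)=0$. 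That coherence is indispensable at this last step is shown by Example \ref{example0}: it gives a two-sided coherent ring with $r.\IFD(R)=0$ yet $l.\wGgldim(R)=\infty$, so there is no coherence-free mechanism turning a Gorenstein flat syzygy of a left injective module into a flat one; it is precisely the finite presentation of the $R/J$ and the $\Ext$--$\Tor$ duality over $R$ displayed above that close the gap.
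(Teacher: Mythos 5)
Your first two steps match the paper: the inequality $l.\wGgldim(R)\le\sup\{l.\IFD(R),r.\IFD(R)\}$ comes for free from Theorem \ref{main result2'}, and $r.\IFD(R)\le n$ holds over any ring by Remark \ref{remark} together with \cite[Lemma 9.18]{Rotman}. Your reduction of $l.\IFD(R)\le n$ to $\Ext_{R^{op}}^{n+1}(R/J,R)=0$ for finitely generated right ideals $J$, via the duality $\Tor^R_i(R/J,E)\cong\Hom_R(\Ext^i_{R^{op}}(R/J,R),E)$, is also legitimate, since right coherence supplies a resolution of $R/J$ by finitely generated projectives. But the step you yourself flag as ``the genuine obstacle'' is never actually carried out: your last paragraph only lists ingredients ($\Gid_{R^{op}}(R^{++})\le n$, purity of $R\hookrightarrow R^{++}$, coherence, the bound $r.\IFD(R)\le n$) and asserts that ``one extracts the vanishing.'' Moreover, the natural way to run that sketch is circular: purity of $R\hookrightarrow R^{++}$ and finite presentation of $R/J$ give an injection $\Ext^{n+1}_{R^{op}}(R/J,R)\hookrightarrow\Ext^{n+1}_{R^{op}}(R/J,R^{++})$, but the target is isomorphic to $\bigl(\Tor^R_{n+1}(R/J,R^{+})\bigr)^{+}$, whose vanishing is exactly the statement $\fd_R(R^{+})\le n$ that you set out to prove; and the bound $\Gid_{R^{op}}(R^{++})\le n$ only kills $\Ext^{n+1}_{R^{op}}(L,R^{++})$ for right modules $L$ of \emph{finite injective dimension}, which $R/J$ need not have. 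So the proof is incomplete at its decisive point.

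The paper closes this gap by a direct module-theoretic mechanism, and this is precisely where right coherence enters: for a left injective $E$ one has $\Gfd_R(E)\le n$, so \cite[Lemma 3.17]{Holm} (valid over right coherent rings) gives an exact sequence $0\to K\to G\to E\to 0$ with $G$ Gorenstein flat and $\fd_R(K)\le n-1$; embedding $G$ into a flat module $F$ (possible since $G$ is Gorenstein flat) and forming the pushout yields $0\to K\to F\to D\to 0$, hence $\fd_R(D)\le n$, and the injective module $E$ splits off the resulting extension $0\to E\to D\to G'\to 0$, giving $\fd_R(E)\le n$. To salvage your route you would need an actual proof that $\FPid_{R^{op}}(R)\le l.\wGgldim(R)$ over a right coherent ring (in the spirit of \cite{Ding}); as written, that inequality is assumed rather than established.
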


\begin{proof}
From Theorem \ref{main result2'}$(3\Rightarrow 1)$, only the
inequality  $(\geq )$ need a proof. So, assume that
$l.\wGgldim(R)\leq n<\infty$. Clearly $l.\wGgldim(R)\geq
r.\IFD(R)$ since $\fd_R(I)\leq n$ for every right injective module
$I$ (by Remark \ref{remark}). So, we have to prove this fact for
$l.\IFD(R)$. Let $E$ be a  left injective $R$-module . Since
$l.\wGgldim(R)\leq n$, we have $\Gfd_R(E)\leq n$. Then, from
\cite[Lemma 3.17]{Holm}, there exists a short exact sequence
$0\rightarrow K \rightarrow G \rightarrow E \rightarrow 0$ where
$G$ is left Gorenstein flat and $\fd_R(K)\leq n-1$ (if $n=0$, this
should be interpreted as $K = 0$). Pick a short exact sequence
$0\rightarrow G\rightarrow F \rightarrow G' \rightarrow 0$ where
$F$ is left flat and $G'$ is left Gorenstein flat. Hence, consider
the following pushout diagram:
$$\begin{array}{ccccccc}
   & 0 &  & 0 &  &  &  \\
   & \downarrow &  & \downarrow &  & &  \\
   & K& =& K &  &  &  \\
   & \downarrow &  & \downarrow &  &  &  \\
  0\rightarrow & G & \rightarrow & F  & \rightarrow & G' & \rightarrow 0 \\
   & \downarrow &  & \downarrow &  & \parallel &  \\
  0\rightarrow & E & \rightarrow & D & \rightarrow  & G' & \rightarrow 0 \\
 & \downarrow &  & \downarrow &  & &  \\
 & 0 &  & 0 &  &  &  \\
\end{array}$$
Clearly, $\fd_R(D)\leq n$ and $E$ is containing in $D$. So, it is a direct summand of $D$ since it is injective. Therefore, $\fd_R(E)\leq n$. Consequently, $l.\wGgldim(R)\geq l.\IFD(R)$,  as desired.
\end{proof}

Similarly, we have:
\begin{proposition}\label{propo left coherent}
Let $R$ be a left coherent ring. Then, $$r.\wGgldim(R)=\sup\{l.\IFD(R),r.\IFD(R)\}$$
\end{proposition}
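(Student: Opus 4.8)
The plan is to deduce Proposition \ref{propo left coherent} as the left--right dual of Proposition \ref{propo right coherent}. The cleanest route is to pass to the opposite ring $R^{op}$: the ring $R$ is left coherent precisely when $R^{op}$ is right coherent, one has $r.\wGgldim(R)=l.\wGgldim(R^{op})$, and $l.\IFD(R^{op})=r.\IFD(R)$ while $r.\IFD(R^{op})=l.\IFD(R)$, so the two sets $\{l.\IFD(R),r.\IFD(R)\}$ and $\{l.\IFD(R^{op}),r.\IFD(R^{op})\}$ coincide. Applying Proposition \ref{propo right coherent} to $R^{op}$ then yields $r.\wGgldim(R)=l.\wGgldim(R^{op})=\sup\{l.\IFD(R^{op}),r.\IFD(R^{op})\}=\sup\{l.\IFD(R),r.\IFD(R)\}$, which is the assertion.

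If one prefers to argue directly, every step of the proof of Proposition \ref{propo right coherent} is repeated with the words ``left'' and ``right'' interchanged. The inequality $r.\wGgldim(R)\leq\sup\{l.\IFD(R),r.\IFD(R)\}$ needs no coherence hypothesis: writing $n$ for the right-hand side, if $n<\infty$ then $\fd_R(E)\leq n$ for every left and every right injective $R$-module $E$, so Theorem \ref{main result2'}$(3\Rightarrow 1)$ gives $\sup\{l.\wGgldim(R),r.\wGgldim(R)\}\leq n$, hence $r.\wGgldim(R)\leq n$ (and there is nothing to prove if $n=\infty$). For the reverse inequality, assume $r.\wGgldim(R)=n<\infty$. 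That $l.\IFD(R)\leq n$ is immediate from Remark \ref{remark}: for a left injective module $I$ and any right module $M$ we have $\Gfd_R(M)\leq n$, hence $\Tor^i_R(M,I)=0$ for all $i>n$, so $\fd_R(I)\leq n$.

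It remains to bound $r.\IFD(R)$, and this is where the argument becomes substantive. Given a right injective $R$-module $E$, we have $\Gfd_R(E)\leq n$, so by \cite[Lemma 3.17]{Holm} there is a short exact sequence $0\rightarrow K\rightarrow G\rightarrow E\rightarrow 0$ with $G$ right Gorenstein flat and $\fd_R(K)\leq n-1$ (read as $K=0$ when $n=0$). Choosing a short exact sequence $0\rightarrow G\rightarrow F\rightarrow G'\rightarrow 0$ with $F$ right flat and $G'$ right Gorenstein flat and forming the pushout of $E\leftarrow G\rightarrow F$ produces a commutative diagram whose middle column is $0\rightarrow K\rightarrow F\rightarrow D\rightarrow 0$ and whose bottom row is $0\rightarrow E\rightarrow D\rightarrow G'\rightarrow 0$; the middle column forces $\fd_R(D)\leq n$, and since $E$ is injective the monomorphism $E\rightarrow D$ splits, so $\fd_R(E)\leq n$. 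Hence $r.\IFD(R)\leq n$, and combining with the previous paragraph gives $r.\wGgldim(R)=\sup\{l.\IFD(R),r.\IFD(R)\}$.

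The one point that requires care --- and the reason the hypothesis here is \emph{left} coherence rather than right coherence --- is the simultaneous use of \cite[Lemma 3.17]{Holm} and the fact that a Gorenstein flat right $R$-module embeds in a flat right $R$-module with Gorenstein flat cokernel: these structural statements for \emph{right} modules require $R$ to be left coherent, exactly as their counterparts for left modules required $R$ to be right coherent in Proposition \ref{propo right coherent}. Routing the proof through the opposite ring sidesteps this bookkeeping altogether, since it is already absorbed into the proof of Proposition \ref{propo right coherent}; I would present that reduction as the actual proof and relegate the direct argument to a remark.
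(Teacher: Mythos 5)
Your proof is correct and takes essentially the same route as the paper, which states Proposition \ref{propo left coherent} without proof as the left--right mirror of Proposition \ref{propo right coherent}; your direct argument reproduces that mirror exactly, and the reduction via the opposite ring is just a clean formalization of the same symmetry. Your closing observation that left coherence is what makes Holm's Lemma 3.17 and the embedding of a Gorenstein flat module into a flat module available for \emph{right} modules is exactly the point the paper leaves implicit.
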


\begin{corollary}\label{coro}
Let $R$ be a ring. The following statements hold:
\begin{enumerate}
  \item If $R$ is right coherent, then $r.\wGgldim(R)\leq l.\wGgldim(R)$.
  \item If $R$ is left coherent, then $l.\wGgldim(R)\leq r.\wGgldim(R)$.
\end{enumerate}
Consequently, if $R$ is two-sided coherent, $r.\wGgldim(R) = l.\wGgldim(R)$

\end{corollary}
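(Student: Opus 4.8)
The plan is to reduce the whole statement to Propositions \ref{propo right coherent} and \ref{propo left coherent} together with the last displayed equality of Theorem \ref{main result2'}. The guiding observation is that the quantity $\sup\{l.\IFD(R),r.\IFD(R)\}$ is manifestly symmetric in ``left'' and ``right'', so as soon as one of $l.\wGgldim(R)$, $r.\wGgldim(R)$ is shown to coincide with this symmetric quantity, the desired comparison drops out from the trivial bound that each entry of a supremum is bounded above by that supremum.

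To prove (1), I would assume $R$ right coherent and invoke Proposition \ref{propo right coherent} to write $l.\wGgldim(R)=\sup\{l.\IFD(R),r.\IFD(R)\}$. On the other hand, Theorem \ref{main result2'} gives unconditionally $\sup\{l.\wGgldim(R),r.\wGgldim(R)\}=\sup\{l.\IFD(R),r.\IFD(R)\}$. Chaining these,
$$r.\wGgldim(R)\ \le\ \sup\{l.\wGgldim(R),r.\wGgldim(R)\}\ =\ \sup\{l.\IFD(R),r.\IFD(R)\}\ =\ l.\wGgldim(R),$$
which is exactly (1). Statement (2) is the mirror image: one interchanges the words ``left'' and ``right'' throughout and uses Proposition \ref{propo left coherent} in place of Proposition \ref{propo right coherent}. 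The final assertion then follows by applying (1) and (2) at the same time when $R$ is two-sided coherent, which yields both $r.\wGgldim(R)\le l.\wGgldim(R)$ and $l.\wGgldim(R)\le r.\wGgldim(R)$, hence equality.

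I do not expect a serious obstacle here; the argument is essentially bookkeeping with the identities established earlier in the section. The one point that requires a little care is variance — one must cite Proposition \ref{propo right coherent} (not \ref{propo left coherent}) in the right-coherent case, and conversely — and one should read all the equalities above as equalities in the set of nonnegative integers together with $\infty$, so that the argument automatically covers the case of infinite dimensions, where the inequalities in question hold vacuously.
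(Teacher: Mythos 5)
Your proposal is correct and follows essentially the same route as the paper: both deduce (1) by combining Proposition \ref{propo right coherent} with the symmetric identity $\sup\{l.\wGgldim(R),r.\wGgldim(R)\}=\sup\{l.\IFD(R),r.\IFD(R)\}$ from Theorem \ref{main result2'}, and obtain (2) and the two-sided case by symmetry. Your explicit remarks about variance and the infinite case are harmless additions but do not change the argument.
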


\begin{proof}
We suggest to prove (1) and the proof of $(2)$ will be similar. If
$R$ is right coherent, we have:
\begin{eqnarray*}
  l.\wGgldim(R) &=& \sup\{l.\IFD(R),r.\IFD(R)\} \qquad  \text{(from Proposition \ref{propo right coherent}).} \\
   & = & \sup\{r.\wGgldim(R),l.\wGgldim(R)\} \quad  \text{(from Theorem \ref{main result2'}).}
\end{eqnarray*}
So, we obtain the desired result.
\end{proof}

\begin{remark}
Using Theorem \ref{main result2'}, Propositions \ref{propo left
coherent}, Proposition \ref{propo right coherent} and Corollary
\ref{coro}, we can find many other characterizations of
$l.\wGgldim(R)$ and $r.\wGgldim(r)$ by using the characterizations
of the $l.\IFD(R)$ and $r.\IFD(R)$. For example we use,
\cite[Theorems 3.5 and  3.8, Proposition 3.17, Corollary 3.18
]{Ding}.
\end{remark}

A commutative version of Theorem \ref{main result2'} is as
follows.
\begin{theorem}\label{main result2}
Let $R$ be a commutative ring and $n$ be a positive integer. The
following conditions are equivalent:
\begin{enumerate}
   \item $\wGgldim(R)\leq n$
   \item $\Gfd_R(R/I)\leq n$ for every  ideal $I$.
   \item $\fd_R(E)\leq n$ for every injective $R$-module $E$.
 \end{enumerate}
 Consequently, $\wGgldim(R)=\IFD(R)$.
 \end{theorem}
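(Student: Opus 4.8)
The plan is to obtain Theorem \ref{main result2} as the commutative specialization of Theorem \ref{main result2'}, using that over a commutative ring there is no distinction between left and right modules. First I would record the identifications $l.\wGgldim(R)=r.\wGgldim(R)=\wGgldim(R)$ and $l.\IFD(R)=r.\IFD(R)=\IFD(R)$, and observe that over a commutative ring ``every left and every right ideal'' reduces to ``every ideal'' and ``every left and every right injective $R$-module'' reduces to ``every injective $R$-module.'' Under these identifications, conditions (1), (2), (3) of Theorem \ref{main result2} are verbatim conditions (1), (2), (3) of Theorem \ref{main result2'}, so the equivalences $(1)\Leftrightarrow(2)\Leftrightarrow(3)$ follow at once for every positive integer $n$, and the displayed chain of equalities at the end of Theorem \ref{main result2'} collapses to $\wGgldim(R)=\sup\{\fd_R(E)\mid E\ \text{injective}\}=\IFD(R)$. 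If a self-contained proof is wanted instead, one may simply transcribe the proof of Theorem \ref{main result2'} while deleting every occurrence of ``left'' and ``right,'' since the ingredients used there (Lemma \ref{lemma}, Remark \ref{remark}, \cite[Lemma 9.18]{Rotman}, and the pushout diagrams) are all insensitive to sidedness over a commutative ring.

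Next I would dispose of the case $n=0$, which is not covered by Theorem \ref{main result2'} but is needed for the unrestricted final equality. For this I would cite Proposition \ref{lemma0}: $\wGgldim(R)=0$ if and only if $R$ is an $\IF$ ring, that is, if and only if every injective $R$-module is flat, that is, if and only if $\IFD(R)=0$. Combining this with the equivalence for positive $n$ gives $\wGgldim(R)\leq n \Leftrightarrow \IFD(R)\leq n$ for all $n\geq 0$.

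Finally, to conclude $\wGgldim(R)=\IFD(R)$ in full generality, including when both sides are infinite, I would argue as follows. The inequality $\IFD(R)\leq\wGgldim(R)$ always holds, by Remark \ref{remark} applied to a Gorenstein flat resolution of an injective module. Conversely, if $\IFD(R)=n<\infty$ then condition (3) of Theorem \ref{main result2} is satisfied, so $\wGgldim(R)\leq n$ by the implication $(3)\Rightarrow(1)$ (for $n=0$ this implication is exactly Proposition \ref{lemma0}); hence $\wGgldim(R)\leq\IFD(R)$ in all cases. This is a routine bookkeeping reduction, so I do not expect any genuine obstacle; the only thing requiring care is to remember to treat the boundary case $n=0$ and the possibility that the dimensions are infinite, which is precisely why the result is phrased as an equivalence for positive $n$ together with the ``Consequently'' clause.
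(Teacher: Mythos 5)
Your proposal is correct and matches the paper's intent exactly: the paper gives no separate proof of Theorem \ref{main result2}, presenting it simply as ``a commutative version of Theorem \ref{main result2'}'', which is precisely the specialization you carry out. Your additional care with the boundary case $n=0$ (via Proposition \ref{lemma0}) and with the possibility of infinite dimensions in the ``Consequently'' clause is a welcome tightening of what the paper leaves implicit.
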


\begin{proposition}[Theorems 3.5, 3.8 and  3.21, \cite{Ding}]
For any commutative ring the following conditions are equivalent:
\begin{enumerate}
  \item $\wGgldim(R)(=\IFD(R))\leq n$.
  \item $\fd_R(M)\leq n$ for every $\FP$-injective module $M$.
  \item $\fd_R(M)\leq n$ for every $R$-module $M$ with $\FP$-$id_R(M)<\infty$.
  \item $\id_R(\Hom_R(A,B))\leq n$ for every $\FP$-injective module $A$ and for every injective
module $B$.
  \item $\fd_R(\Hom_R(F,B))\leq n$ for every flat modules $F$ and all injective module $B$.
\end{enumerate}
Moreover, if $R$ is coherent, $\wGgldim(R)=\FP-\id_R(R)$.
\end{proposition}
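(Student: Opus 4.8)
The plan is to deduce the entire statement from Theorem \ref{main result2} together with the quoted theorems of Ding, so that no genuinely new argument is required. By Theorem \ref{main result2}, for a commutative ring $R$ one has
$$\wGgldim(R)\;=\;\IFD(R)\;=\;\sup\{\fd_R(E)\mid E\text{ an injective }R\text{-module}\},$$
so condition (1) is, by definition, exactly the inequality $\IFD(R)\le n$. Consequently each of (2)--(5) is precisely one of the characterizations of $\IFD(R)\le n$ recorded in \cite[Theorems 3.5 and 3.8]{Ding}, and the final clause is \cite[Theorem 3.21]{Ding}, which asserts $\IFD(R)=\FP\text{-}\id_R(R)$ for coherent $R$, combined once more with Theorem \ref{main result2}. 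So all that remains is to line up (2)--(5) with their counterparts in Ding's statements; the reduction itself is formal.

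It is worth recording the elementary implications separately, since they use neither Ding's machinery nor Theorem \ref{main result2}. Every injective module is $\FP$-injective, which gives $(2)\Rightarrow(1)$; an $\FP$-injective module has $\FP$-injective dimension $0<\infty$, which gives $(3)\Rightarrow(2)$; and $(2)\Rightarrow(3)$ is a quick induction on $m=\FP\text{-}\id_R(M)$ (over modules of finite $\FP$-injective dimension): the case $m=0$ is $(2)$, and for $m\ge 1$ one picks a short exact sequence $0\to M\to E\to L\to 0$ with $E$ injective, observes $\FP\text{-}\id_R(L)=m-1$, and uses the estimate $\fd_R(M)\le\max\{\fd_R(E),\fd_R(L)\}$ read off from the long exact $\Tor$ sequence. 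Together with $(1)\Rightarrow(2)$ this closes the loop $(1)\Leftrightarrow(2)\Leftrightarrow(3)$.

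The substantive step, and the one I expect to be the main obstacle if one wanted a self-contained proof, is $(1)\Rightarrow(2)$: moving from the statement that every injective module has flat dimension at most $n$ to the same for every $\FP$-injective module, over a commutative ring that need not be coherent. A plain dimension shift fails there, and this is exactly the content of \cite[Theorem 3.8]{Ding}, which goes through character-module duality (notably $\fd_R(-)=\id_R((-)^{+})$) and purity arguments rather than a direct resolution. In the same spirit, $(1)\Leftrightarrow(4)\Leftrightarrow(5)$ relies on Hom-evaluation and adjunction isomorphisms that convert the injective or flat dimension of $\Hom_R(A,B)$, for $A$ flat or $\FP$-injective and $B$ injective, into $\Tor$- and $\Ext$-vanishing that detects $\IFD(R)$; this is \cite[Theorem 3.5]{Ding}. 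Given these inputs I foresee no difficulty beyond the bookkeeping of the reduction.
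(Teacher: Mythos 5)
Your reduction is exactly what the paper intends: the proposition is stated with the attribution to Ding's Theorems 3.5, 3.8 and 3.21 and carries no proof of its own, the point being precisely that Theorem \ref{main result2} identifies $\wGgldim(R)$ with $\IFD(R)$ so that Ding's characterizations of the latter apply verbatim. Your supplementary elementary implications (the loop $(1)\Leftrightarrow(2)\Leftrightarrow(3)$ via dimension shifting on the $\FP$-injective dimension) are correct and are a harmless bonus beyond what the paper records.
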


In , \cite[Theorem 2.11 and 3.5]{Bennis and Mahdou3} the authors
prove that:
 \begin{description}
   \item[R1] If $\{R_i\}_{i=1}^n$ is a family of coherent commutative rings then:
    $$\wGgldim( \displaystyle \prod_{i=1}^{n}R_i)=sup\{\wGgldim(R_i); 1\leq i\leq n\}.$$
   \item[R2] If the polynomial ring $R[x]$ in one indeterminate $x$ over a commutative ring $R$ is
coherent, then: $\wGgldim(R[x]) = \wGgldim(R) + 1$.
 \end{description}

  In the next Theorems we see that the coherence condition is not necessary in $\mathbf{R1}$ and $\mathbf{R2}$.

\begin{theorem}\label{direct product}
For every family of commutative rings $\{R_i\}_{i=1}^n$ we have:
\begin{center}
$\wGgldim(\displaystyle\prod_{i=1}^nR_i)=\sup\{wGgldim(R_i),\;  1\leq i\leq n\}.$
\end{center}
\end{theorem}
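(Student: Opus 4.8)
The plan is to reduce the statement to Theorem~\ref{main result2'}, or rather its commutative form Theorem~\ref{main result2}, by characterizing the weak Gorenstein global dimension of a finite product through flat dimensions of injective modules. Set $R=\prod_{i=1}^n R_i$. The key structural fact is that $R$-modules decompose: every $R$-module $M$ is canonically a product $\prod_{i=1}^n M_i$ where $M_i = e_i M$ is an $R_i$-module (with $e_i$ the idempotent corresponding to the $i$-th factor), and this decomposition is compatible with tensor products and $\Tor$, since $R_i = e_i R$ is a flat (indeed projective) $R$-module and $M\otimes_R R_i \cong M_i$. In particular, for $R$-modules $M=\prod M_i$ and $N=\prod N_i$ one has $\Tor^k_R(M,N)\cong \prod_i \Tor^k_{R_i}(M_i,N_i)$. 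Hence $\fd_R(M) = \sup_i \fd_{R_i}(M_i)$, and an $R$-module is injective if and only if each component is an injective $R_i$-module.

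Granting these reductions, I would argue as follows. First I would establish the inequality $\wGgldim(R)\le \sup_i \wGgldim(R_i)$: if $n:=\sup_i\wGgldim(R_i)<\infty$ (otherwise there is nothing to prove), then by Theorem~\ref{main result2} applied to each $R_i$ we have $\fd_{R_i}(E_i)\le n$ for every injective $R_i$-module $E_i$; given an injective $R$-module $E=\prod E_i$, each $E_i$ is $R_i$-injective, so $\fd_R(E)=\sup_i\fd_{R_i}(E_i)\le n$, and another application of Theorem~\ref{main result2} (direction $(3)\Rightarrow(1)$) to $R$ gives $\wGgldim(R)\le n$. For the reverse inequality $\wGgldim(R)\ge\wGgldim(R_j)$ for each fixed $j$, I would take an $R_j$-module $M_j$ and view it as an $R$-module $M$ via the projection $R\to R_j$ (so $M_i=0$ for $i\ne j$); then $\Gfd_R(M)$ relates to $\Gfd_{R_j}(M_j)$, or more cleanly $\fd_R$ of injectives restricts correctly, so that the characterization via Theorem~\ref{main result2} transfers the bound back down. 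Concretely: if $\wGgldim(R)\le n$, then every injective $R$-module has $\fd_R\le n$; taking $E_j$ an injective $R_j$-module and $E=E_j$ (with zero in the other slots, which is injective over $R$), we get $\fd_{R_j}(E_j)=\fd_R(E)\le n$, whence $\wGgldim(R_j)=\IFD(R_j)\le n$ by Theorem~\ref{main result2}.

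The main obstacle I anticipate is the bookkeeping around the module decomposition: one must check carefully that a module over the product genuinely splits as a product of modules over the factors (this uses that the $e_i$ are central orthogonal idempotents summing to $1$, so it is a standard but essential point), and that injectivity, flatness, $\Tor$, and Gorenstein flat dimension all respect this splitting. The compatibility of $\Gfd$ with the decomposition would follow either from Lemma~\ref{lemma} applied componentwise, or — and this is the slicker route that avoids touching Gorenstein flatness directly — by never leaving the realm of ordinary flat dimensions of injective modules and invoking only Theorem~\ref{main result2} at both ends, as sketched above. I would favor that slicker route: it confines all the work to the elementary facts that $R\text{-injectives} = \prod R_i\text{-injectives}$ and $\fd_R(\prod M_i)=\sup_i\fd_{R_i}(M_i)$, both of which are quick consequences of $R_i$ being a quotient of $R$ by an idempotent ideal, hence $R$-projective.
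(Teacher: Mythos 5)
Your proposal is correct, and for the inequality $\wGgldim(\prod_i R_i)\leq\sup_i\wGgldim(R_i)$ it coincides with the paper's argument: decompose an injective module over the product into injective components $E_i=e_iE\cong\Hom_R(R_i,E)$, bound each $\fd_{R_i}(E_i)$ by Theorem \ref{main result2} applied to $R_i$, use $\fd_R(\prod_iE_i)=\sup_i\fd_{R_i}(E_i)$ (this is exactly the cited Lemma 3.7 of Bennis--Mahdou), and conclude by Theorem \ref{main result2} applied to $R$. Where you genuinely diverge is the reverse inequality. The paper takes an $R_j$-module $M_j$, forms the $R$-module $\prod_iM_i$, and invokes Holm's Proposition 3.10 (descent of Gorenstein flat dimension along the projective ring extension $R\to R_j$) to get $\Gfd_{R_j}(M_j)\leq\Gfd_R(\prod_iM_i)$, so it works directly with Gorenstein flat dimensions. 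You instead never leave the world of flat dimensions of injectives: you observe that an injective $R_j$-module $E_j$, inflated to $R$ with zero in the other slots, is still injective over $R$ (because $\Hom_R(N,E_j)\cong\Hom_{R_j}(e_jN,E_j)$ and $N\mapsto e_jN$ is exact), and that $\fd_{R_j}(E_j)=\fd_R(E_j)$, so Theorem \ref{main result2} transfers the bound down. Your route is more uniform --- both directions are handled by the same characterization and it avoids Holm's Proposition 3.10 entirely --- at the cost of having to verify the small but essential facts that injectivity and flat dimension are preserved under the idempotent decomposition; the paper's route outsources the hard direction to an off-the-shelf result about $\Gfd$ under ring change. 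Both are valid; just make sure, if you write yours up, to state explicitly (as you anticipate) that the $e_i$ are central orthogonal idempotents summing to $1$, that $R_i=e_iR$ is projective over $R$, and to handle the trivial case $\sup_i\wGgldim(R_i)=\infty$ separately, since Theorem \ref{main result2} is stated for a finite integer $n$.
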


\begin{proof}
By induction on $n$ it suffices to prove this result for $n=2$. \\
Assume that $\wGgldim(R_1\times R_2)\leq k$. Let $M_i$ be an $R_i$-module for $i=1,2$. Since each $R_i$ is  projective $R_1\times R_2$-module, by \cite[Proposition 3.10]{Holm} we have $\Gfd_{R_i}(M_i)\leq \Gfd_{R_i\times R_2}(M_1\times M_2)\leq k$. This follows that $\wGgldim(R_i)\leq k$ for each $i=1,2$.\\
Conversely, suppose that $\sup\{\wGgldim(R_i),\; 1=1,2\}\leq k$.
Let $I$ be an arbitrary injective $R_1\times R_2$-module. We can
see that $I\cong \Hom_{R_1\times R_2}(R_1\times R_2,I)\cong
\Hom_{R_1\times R_2}(R_1,I)\times \Hom_{R_1\times R_2}(R_2,I)$ and
that $I_i=\Hom_{R_1\times R_2}(R_i,I)$ is an  injective
$R_i$-module for each $i=1,2$. Since $\wGgldim(R_i)\leq k$ for
each $i$, we get that $\fd_{R_i}(I_i)\leq k$ (by Theorem \ref{main
result2}). Using \cite[Lemma 3.7]{Bennis and Mahdou3}, we have
$\fd_{R_1\times R_2}(I_1\times I_2)=\sup\{\fd_{R_i}(I_1),1\leq
i\leq 2\}\leq k$. Consequently, by Theorem \ref{main result2},
$\wGgldim(R_1\times R_2)\leq k$. This completes the proof.
\end{proof}

\begin{theorem}
Let $R[x]$ be  the polynomial ring  in one indeterminate $x$ over a commutative ring $R$. Then: $\wGgldim(R[x]) = \wGgldim(R) + 1$.
\end{theorem}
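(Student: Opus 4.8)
The plan is to first apply Theorem~\ref{main result2}, which replaces both sides by the injective flat dimension and reduces the statement to the purely classical identity $\IFD(R[x]) = \IFD(R)+1$; this is the crucial move, since it trades Gorenstein flat resolutions (and any change-of-rings theorem for $\Gfd$) for ordinary $\Tor$ bookkeeping along the surjection $R[x]\twoheadrightarrow R$, $x\mapsto 0$. Throughout, an $R$-module is regarded as an $R[x]$-module via $x\mapsto 0$, and I will use two standard facts about this surjection: (i) any $R[x]$-module $M$ fits in the exact sequence $0 \to R[x]\otimes_R M \xrightarrow{\psi} R[x]\otimes_R M \to M \to 0$ with $\psi(f\otimes m)=xf\otimes m-f\otimes xm$; and (ii) for $R$-modules $M,N$ there are exact sequences $0 \to \Tor^R_i(M,N) \to \Tor^{R[x]}_i(M,N) \to \Tor^R_{i-1}(M,N) \to 0$, obtained by feeding the special case $0 \to R[x]\otimes_R M \xrightarrow{\,x\,} R[x]\otimes_R M \to M \to 0$ of (i) (here $xM=0$) into $\Tor^{R[x]}_\ast(-,N)$ and noting that multiplication by $x$ is $0$ on $\Tor^{R[x]}_\ast(R[x]\otimes_R M,N)\cong\Tor^R_\ast(M,N)$ because $xN=0$. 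Since $R\to R[x]$ is free, flat base change gives $\fd_{R[x]}(R[x]\otimes_R M)\le\fd_R(M)$; taken together these are exactly the classical content behind $\fd_{R[x]}(M)=\fd_R(M)+1$.

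For $\IFD(R[x])\le\IFD(R)+1$, let $E$ be an injective $R[x]$-module. The point is that $E|_R$ is injective over $R$: choosing an injective $R$-module $J$ with $E|_R\hookrightarrow J$ and composing the canonical $R[x]$-linear map $E\to\Hom_R(R[x],E|_R)$, $e\mapsto(f\mapsto fe)$, with the inclusion $\Hom_R(R[x],E|_R)\hookrightarrow\Hom_R(R[x],J)$ realizes $E$ as an $R[x]$-submodule of $\Hom_R(R[x],J)$; since $E$ is $R[x]$-injective this splits, so $E|_R$ is a direct summand of $\Hom_R(R[x],J)|_R\cong\prod_{n\geq 0}J$, which is $R$-injective. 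Thus $\fd_R(E|_R)\le\IFD(R)$, and feeding $M=E$ into (i) together with flat base change gives $\fd_{R[x]}(E)\le\fd_R(E|_R)+1\le\IFD(R)+1$.

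For $\IFD(R[x])\ge\IFD(R)+1$, I would test on the coinduced injective $R[x]$-modules $H:=\Hom_R(R[x],I)$ with $I$ injective over $R$, and show $\fd_{R[x]}(H)\ge\fd_R(I)+1$. The device is the exact sequence of $R[x]$-modules $0 \to I \to H \xrightarrow{\,x\,} H \to 0$: identifying $H$ with $\prod_{n\geq 0}I$, multiplication by $x$ is the surjective left shift and its kernel is the trivial $R[x]$-module $I$. Assume $d:=\fd_R(I)<\infty$ and pick an $R$-module $N$ with $\Tor^R_d(I,N)\neq 0$; by (ii), $\Tor^{R[x]}_{d+1}(I,N)\neq 0$. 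Applying $\Tor^{R[x]}_\ast(-,N)$ to $0\to I\to H\xrightarrow{x}H\to 0$ and using once more that $x$ acts as $0$ on $\Tor^{R[x]}_\ast(H,N)$, the long exact sequence collapses into $0 \to \Tor^{R[x]}_{d+2}(H,N) \to \Tor^{R[x]}_{d+1}(I,N) \to \Tor^{R[x]}_{d+1}(H,N) \to 0$, and nonvanishing of the middle term forces $\Tor^{R[x]}_{d+1}(H,N)\neq 0$ or $\Tor^{R[x]}_{d+2}(H,N)\neq 0$, i.e. $\fd_{R[x]}(H)\ge d+1$ (if $\fd_R(I)=\infty$, the same computation in each degree gives $\fd_{R[x]}(H)=\infty$). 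Since $H$ is $R[x]$-injective, $\IFD(R[x])\ge\fd_R(I)+1$ for every injective $R$-module $I$, and taking the supremum over $I$ gives $\IFD(R[x])\ge\IFD(R)+1$; with the previous paragraph, $\IFD(R[x])=\IFD(R)+1$.

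The step I expect to cost the most care is the lower bound. One controls only the middle term $\Tor^{R[x]}_{d+1}(I,N)$ of the degenerate long exact sequence, never the two outer terms individually, so the argument can only conclude $\fd_{R[x]}(H)\ge d+1$ --- which is fortunately all that is required --- and it is essential to take $H$ coinduced, both so that the shift sequence $0\to I\to H\xrightarrow{x}H\to 0$ exists and so that its middle and right-hand terms coincide (hence are annihilated by $x$ on $\Tor$ against $N$). Reducing to $\IFD$ via Theorem~\ref{main result2} at the outset is exactly what keeps the whole argument at the level of ordinary flat dimension rather than forcing a direct change-of-rings analysis of Gorenstein flat modules along $R[x]\to R$.
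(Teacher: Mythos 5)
Your proof is correct. The reduction to $\IFD(R[x])=\IFD(R)+1$ via Theorem~\ref{main result2} and the two upper bounds ($\wGgldim(R)\le\wGgldim(R[x])$ via the coinduced injective $\Hom_R(R[x],I)$, and $\wGgldim(R[x])\le\wGgldim(R)+1$ via restriction of injectives along the free extension plus $\fd_{R[x]}(E)\le\fd_R(E)+1$) run essentially parallel to the paper's argument. The genuine divergence is in the lower bound $\wGgldim(R[x])\ge\wGgldim(R)+1$. The paper takes an injective $R$-module $I$ with $\fd_R(I)=n$, regards it as an $R[x]$-module killed by $x$, and computes $\fd_{R[x]}(I)=n+1$ by passing to the character module: $\id_R(I^+)=n$ by Fieldhouse's lemma, then $\id_{R[x]}(I^+)=n+1$ by Kaplansky's Theorem 202 applied to the non-zero-divisor $x\in R[x]$, then back. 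Since this $I$ is not $R[x]$-injective, the paper needs a further step: it notes $\id_{R[x]}(I)=1$, takes an injective resolution $0\to I\to I_0\to I_1\to 0$ over $R[x]$, and deduces that one of $I_0,I_1$ has flat dimension at least $n+1$. You instead exhibit an explicit injective witness, the coinduced module $H=\Hom_R(R[x],I)$, and prove $\fd_{R[x]}(H)\ge\fd_R(I)+1$ by a $\Tor$ computation built on the shift sequence $0\to I\to H\stackrel{x}{\longrightarrow}H\to 0$ combined with the standard change-of-rings sequence for $\Tor$ along $R[x]\twoheadrightarrow R$. This keeps the whole argument on the flat/$\Tor$ side (no character modules, no appeal to Kaplansky's injective-dimension theorem), handles $\fd_R(I)=\infty$ by the same mechanism, and correctly isolates the one delicate point: only the middle term of the collapsed long exact sequence is controlled, so one concludes merely that one of the two outer terms is nonzero --- which suffices for $\fd_{R[x]}(H)\ge d+1$. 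Both routes are valid; yours is the more self-contained and arguably more elementary, while the paper's leans on two quotable classical results and is correspondingly shorter on the page.
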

To prove this Theorem  we need the following Lemmas. Note that $M^+$ denote the character $\Hom_{\mathbb{Z}}(M,\mathbb{Q}/\mathbb{Z})$ of $M$.

\begin{lemma} \cite[Theorem 2.1]{character2} \label{lemma1}
Let $R$ be any ring and $M$ an $R$-module. Then,
$\fd_R(M)=\id_R(M^+)$.
\end{lemma}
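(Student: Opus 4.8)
The plan is to obtain the equality from the classical interplay between $\Tor$, $\Ext$ and the character functor $(-)^{+}=\Hom_{\mathbb{Z}}(-,\mathbb{Q}/\mathbb{Z})$. Suppose first that $M$ is a left $R$-module, so that $M^{+}$ is a right $R$-module. The starting point is the hom--tensor adjunction (adjoint associativity with the abelian group $\mathbb{Q}/\mathbb{Z}$): for every right $R$-module $N$ there is an isomorphism, natural in $N$,
$$\Hom_R(N,M^{+})\;\cong\;(N\otimes_R M)^{+}.$$
Since $\mathbb{Q}/\mathbb{Z}$ is an injective $\mathbb{Z}$-module, the functor $(-)^{+}$ is exact, hence commutes with (co)homology. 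Applying the two sides of the displayed isomorphism to the terms of a projective resolution of $N$ over $R$ then yields natural isomorphisms
$$\Ext^{i}_R(N,M^{+})\;\cong\;\Tor^{R}_{i}(N,M)^{+}\qquad\text{for all }i\ge 0.$$

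Next I would invoke that $\mathbb{Q}/\mathbb{Z}$ is an injective cogenerator of the category of abelian groups: an abelian group $A$ is zero if and only if $A^{+}=0$. Together with the isomorphism above this gives, for every right $R$-module $N$ and every $i\ge 0$,
$$\Tor^{R}_{i}(N,M)=0\iff\Ext^{i}_R(N,M^{+})=0.$$

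Finally, I would translate this into the two dimensions using their standard descriptions: $\fd_R(M)\le n$ precisely when $\Tor^{R}_{n+1}(N,M)=0$ for every right $R$-module $N$, and $\id_R(M^{+})\le n$ precisely when $\Ext^{n+1}_R(N,M^{+})=0$ for every right $R$-module $N$. By the equivalence just established these two conditions agree for every $n$ (and both fail for all $n$ at once), so $\fd_R(M)=\id_R(M^{+})$, with the convention that both sides are $\infty$ when neither is finite. When $M$ is a right module the argument is symmetric.

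I do not expect a genuine obstacle here; this is a classical fact, and the only care needed is bookkeeping: keeping the left/right sides consistent throughout (if $M$ is a left module one tests with right modules $N$, and $M^{+}$ is then a right module), and noting that the adjunction isomorphism, being natural in $N$, is compatible with the differentials of a projective resolution, so that it genuinely passes to the derived functors as asserted in the second displayed formula.
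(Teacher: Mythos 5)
Your proof is correct. The paper does not prove this lemma at all --- it simply cites it as \cite[Theorem 2.1]{character2} (Fieldhouse) --- and your argument is exactly the standard one behind that reference: the natural isomorphism $\Ext^i_R(N,M^+)\cong\Tor_i^R(N,M)^+$ obtained from hom--tensor adjunction and exactness of $(-)^+$, combined with the fact that $\mathbb{Q}/\mathbb{Z}$ is an injective cogenerator, correctly yields $\fd_R(M)=\id_R(M^+)$, including the infinite case.
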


\begin{lemma}  \cite[Theorem 202]{Kaplansky} \label{lemma2}
Let $R$ be any ring (not necessarily commutative). Let $x$ be a
central non-zero-divisor in $R$, and write $R^{\ast} = R/(x)$. Let
$A$ be a non-zero $R^{\ast}$-module with $\id_{R^{\ast}}(A) = n <
\infty$. Then $\id_R(A) = n + 1$.
\end{lemma}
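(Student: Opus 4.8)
The plan is to prove the two inequalities $\id_R(A)\le n+1$ and $\id_R(A)\ge n+1$ separately, both extracted from a single change-of-rings spectral sequence relating $\Ext$ over $R$ and over $R^{\ast}$. Since $x$ is a central non-zero-divisor, $(x)=Rx$ is a two-sided ideal, $R^{\ast}=R/(x)$ is a ring, and $0\to R\xrightarrow{x}R\to R^{\ast}\to 0$ is a projective resolution of $R^{\ast}$ as an $R$-module. The starting observation is the adjunction $\Hom_R(M,A)\cong\Hom_{R^{\ast}}(R^{\ast}\otimes_R M,A)$, valid because $xA=0$ forces every $R$-map $M\to A$ to factor through $M/xM=R^{\ast}\otimes_R M$. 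Deriving the composite functor $M\mapsto R^{\ast}\otimes_R M\mapsto \Hom_{R^{\ast}}(R^{\ast}\otimes_R M,A)$ (projective $R$-modules are sent to projective, hence $\Hom_{R^{\ast}}(-,A)$-acyclic, $R^{\ast}$-modules) yields the Grothendieck spectral sequence
$$E_2^{p,q}=\Ext^p_{R^{\ast}}\bigl(\Tor^R_q(R^{\ast},M),A\bigr)\ \Longrightarrow\ \Ext^{p+q}_R(M,A).$$

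Then I would compute the inner $\Tor$ by tensoring the length-one resolution above with $M$: this gives $\Tor^R_0(R^{\ast},M)=M/xM$, $\Tor^R_1(R^{\ast},M)=(0:_M x)$, and $\Tor^R_q(R^{\ast},M)=0$ for $q\ge 2$. Hence the spectral sequence is concentrated in the two rows $q=0,1$, so it degenerates into a long exact sequence
$$\cdots\to\Ext^p_{R^{\ast}}(M/xM,A)\to\Ext^p_R(M,A)\to\Ext^{p-1}_{R^{\ast}}\bigl((0:_M x),A\bigr)\to\Ext^{p+1}_{R^{\ast}}(M/xM,A)\to\cdots$$
Crucially, both $M/xM$ and $(0:_M x)$ are $R^{\ast}$-modules, so the hypothesis $\id_{R^{\ast}}(A)=n$ makes every $\Ext^{>n}_{R^{\ast}}(-,A)$ term vanish.

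For the upper bound, reading this sequence at $p=n+2$ sandwiches $\Ext^{n+2}_R(M,A)$ between $\Ext^{n+2}_{R^{\ast}}(M/xM,A)=0$ and $\Ext^{n+1}_{R^{\ast}}((0:_M x),A)=0$, so $\Ext^{n+2}_R(M,A)=0$ for every $R$-module $M$; thus $\id_R(A)\le n+1$. For the lower bound, choose an $R^{\ast}$-module $N$ with $\Ext^n_{R^{\ast}}(N,A)\ne 0$, which exists because $\id_{R^{\ast}}(A)=n$ exactly (and $A\ne 0$). Restricting $N$ to an $R$-module along $R\to R^{\ast}$ makes $x$ act as $0$, so $(0:_N x)=N$, and at $p=n+1$ the sequence reads $0=\Ext^{n+1}_{R^{\ast}}(N,A)\to\Ext^{n+1}_R(N,A)\to\Ext^n_{R^{\ast}}(N,A)\to\Ext^{n+2}_{R^{\ast}}(N,A)=0$, giving $\Ext^{n+1}_R(N,A)\cong\Ext^n_{R^{\ast}}(N,A)\ne 0$. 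Hence $\id_R(A)\ge n+1$, and combining the two bounds gives $\id_R(A)=n+1$.

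The step I expect to be most delicate is the lower bound: one must be sure that the class in $\Ext^n_{R^{\ast}}(N,A)$ genuinely lifts to a nonzero element of $\Ext^{n+1}_R(N,A)$ rather than being destroyed by a differential. The two-row shape of the spectral sequence is exactly what guarantees this — the relevant term is both a cycle and not a boundary for degree reasons — which is why I prefer to package the argument as the displayed long exact sequence, where the isomorphism $\Ext^{n+1}_R(N,A)\cong\Ext^n_{R^{\ast}}(N,A)$ is immediate. An alternative, spectral-sequence-free route is an induction on $n$: the base case $n=0$ shows that a nonzero $R^{\ast}$-injective module cannot be $R$-injective (injective $R$-modules are $x$-divisible, whereas $xA=0$ with $A\ne 0$) yet satisfies $\id_R(A)\le 1$, and the inductive step dimension-shifts through a short exact sequence $0\to A\to E\to A'\to 0$ with $E$ injective over $R^{\ast}$. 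I would keep the spectral sequence as the main line, since it treats both bounds uniformly and needs no separate regularity hypothesis on the test modules.
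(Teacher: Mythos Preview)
Your argument is correct. The two-row change-of-rings spectral sequence
\[
E_2^{p,q}=\Ext^p_{R^{\ast}}\bigl(\Tor^R_q(R^{\ast},M),A\bigr)\Longrightarrow\Ext^{p+q}_R(M,A)
\]
is set up properly (the adjunction $\Hom_R(M,A)\cong\Hom_{R^{\ast}}(R^{\ast}\otimes_R M,A)$ and the fact that $R^{\ast}\otimes_R-$ sends $R$-projectives to $R^{\ast}$-projectives are exactly what is needed), and the resolution $0\to R\xrightarrow{x}R\to R^{\ast}\to 0$ does collapse it to two rows. Both bounds follow from the resulting long exact sequence as you describe; the lower bound in particular is airtight once you observe that for an $R^{\ast}$-module $N$ one has $(0:_N x)=N=N/xN$.

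There is nothing to compare with in the paper itself: the lemma is quoted verbatim from Kaplansky's \emph{Commutative Rings} (Theorem 202) with no proof supplied. Kaplansky's own argument is essentially the elementary alternative you sketch at the end: an induction on $n$, with the base case resting on the observation that a nonzero module killed by $x$ cannot be $R$-injective (it is not $x$-divisible) together with a direct verification that $\id_R(A)\le 1$ when $A$ is $R^{\ast}$-injective, and the inductive step carried out by dimension-shifting through $0\to A\to E\to A'\to 0$ with $E$ injective over $R^{\ast}$. Your spectral-sequence route is more uniform and avoids splitting into cases, at the cost of invoking heavier machinery; Kaplansky's argument is self-contained and requires nothing beyond the long exact $\Ext$ sequence and a one-step Rees-type comparison. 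Either is perfectly adequate here.
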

\begin{proof}
First, we will prove that $\wGgldim(R)\leq \wGgldim(R[x])$. Let
$I$ be an arbitrary injective $R$-module. Clearly, the
$R[X]$-module $\Hom_R(R[x],I)$ is injective. Hence, from Theorem
\ref{main result2}, $\fd_{R[x]}(\Hom_R(R[x],I))\leq
\wGgldim(R[x])$. On the other hand, from \cite[Theorem
1.3.12]{Glaz}, $\fd_R(\Hom_R(R[x],I))\leq
\fd_{R[x]}(\Hom_R(R[x],I))$. And it is clear that $I=\Hom_R(R,I)$
is a direct summand of $\Hom_R(R[x],I)$. Hence, $\fd_R(I)\leq
\wGgldim(R[x])$. Then,
$$\wGgldim(R)=\sup\{\fd(I)\mid \text{I injective  R-module}\}\leq
\wGgldim(R[x]).$$ Secondly, we will prove that $\wGgldim(R[x])\leq
\wGgldim(R)+1$. We may assume that $\wGgldim(R)=n<\infty$.
Otherwise, the result is obvious. Let $I$ be an arbitrary
injective $R[x]$-module. From \cite[Theorem 1.3.16]{Glaz},
$\fd_{R[x]}(I)\leq \fd_R(I)+1$. But $I$ is also an injective
$R$-module since $R[x]$ is a free (then flat) $R$-module. Then,
$\fd_{R[x]}(I)\leq \fd_R(I)+1 \leq \wGgldim(R)+1$. Hence,
$$\wGgldim(R[x])=\sup\{\fd(I)\mid \text{I  injective  R[x]-module}\}\leq \wGgldim(R)+1.$$
Finally, we have to prove that $\wGgldim(R[x])\geq \wGgldim(R)+1$.
From the first part of this proof, we may assume that
$\wGgldim(R)=n<\infty$. Otherwise, the result is obvious. Let $I$
be an injective $R$-module such that $\fd_R(I)=n$ (there exists
since Theorem \ref{main result2}). Then, from Lemma \ref{lemma1},
$\id_R(I^+)=n<\infty$. Therefore, by Lemma \ref{lemma2},
$\id_{R[x]}(I^+)=n+1$. Again by Lemma \ref{lemma1},
$\fd_{R[x]}(I)=n+1$. On the other hand, by Lemma \ref{lemma2},
$\id_{R[x]}(I)=1$. Pick an injective resolution of $I$ over $R[x]$
as follows: $0\rightarrow I \rightarrow I_0\rightarrow
I_1\rightarrow 0$ where $I_0$ and $I_1$ are injective
$R[x]$-modules. Then, $n+1=\fd_{R[x]}(I)\leq
\sup\{\fd_{R[x]}(I_0),\fd_{R[x]}(I_1)-1\}\leq \wGgldim(R[x])$.
Therefore, $\wGgldim(R)+1\leq \wGgldim(R[x])$, as desired. This
finish our proof.
\end{proof}

\begin{remark}
Let $M$ be an $R$-module. Using the definition of the character
$M^+=\Hom_{\mathbb{Z}}(M,\mathbb{Q}/\mathbb{Z})$, we see that the
modulation of $M^+$ over $R[x]$ is the same:
\begin{enumerate}
  \item When we consider $M$ as an $R$-module and then  we consider $M^+$ as an $R[x]$-module.
  \item And when we consider $M$ as an $R[x]$-module (by set $xM=0$) from the beginning.
\end{enumerate}
\end{remark}

Now we are able to give a class of non-coherent rings $R_n$ with
infinite weak global dimensions such that $wGldim(R_n)=n$.
\begin{example}
Consider  $R:=K[X]/(X^2)$ the  local Noetherian non semi-simple
quasi-Frobenius ring and let $S$ be a non-coherent commutative
ring with $\wdim(R)=1$. Set, $T_0=R$ and $T_n=R[X_1,X_2,...,X_n]$
the polynomial ring over $R$. Then,

\begin{enumerate}
  \item $\wdim(T_n\times S)=\infty$,
  \item $\wGgldim(T_n\times S)=n$, and
  \item $T_n\times S$ is not coherent.
\end{enumerate}
\end{example}
\begin{proof}
(1) Follows from the fact that $\wdim(R)=\infty$.\\
(2) Clearly, since $R$ is Noetherian and from \cite[theorem
3.5]{Bennis and Mahdou3}, \cite[Corollary 1.2 and Proposition
2.6]{Bennis and Mahdou2} and \cite[Theorem 12.3.1]{Enocks and
janda} we have $\wGgldim(T_n)=\Ggldim(T_n)=n$ and
$\wGgldim(S)=\wdim(R)=1$. Hence, by Theorem \ref{direct product},
$\wGgldim(T_n\times S)=n$, as desired. \\
(3) Clear since $S$ is non-coherent and this completes the proof.
\end{proof}

\bibliographystyle{amsplain}

\begin{thebibliography}{10}
\bibitem{Auslander}M. Auslander, \textit{On the dimension of modules and algebras (III)}, global
dimension, Nagoya Math. J., 9 (1955), 67-77.
\bibitem{Aus bri}M. Auslander and M. Bridger;  \textit{Stable module theory},
Memoirs. Amer. Math. Soc., 94, American Mathematical Society,
Providence, R.I., 1969.

\bibitem{Bennis and Mahdou2}D. Bennis and N. Mahdou; \textit{Global
Gorenstein Dimensions}, Proceedings of
the American Mathematical Society. S 0002-9939(09)10099-0 (pp. 1-5). Available from
math.AC/0611358v4 30 Jun 2009.
\bibitem{Bennis and Mahdou3}D. Bennis and N.
Mahdou; \textit{Global Gorenstein dimensions of polynomial rings
and of direct products of rings}, Houston Journal of Mathematics
Volume 35, No. 4 (2009), 1019-1028.


\bibitem{Christensen} L. W. Christensen; \textit{Gorenstein dimensions}, Lecture Notes in Math., Vol. 1747, Springer,
Berlin, (2000).
\bibitem{Christensen and Frankild}
L. W. Christensen, A. Frankild, and H. Holm; \textit{On Gorenstein
projective, injective and flat dimensions - a functorial
description with applications}, J. Algebra 302 (2006), 231-279.
\bibitem{colby} R. R. Colby; \textit{On Rings wich have flat injective modules}, J. Algebra 35 (1975), 239–252.
\bibitem{Ding} N. Q. Ding and J. L. Chen; \textit{The flat dimensions of injective modules}, Manuscripta Math. 78 (1993), 165-177.
\bibitem{Enochs} E. Enochs and O. Jenda; \textit{On Gorenstein injective modules}, Comm.
Algebra 21 (1993), no. 10, 3489-3501.

\bibitem{Enochs2} E. Enochs and O. Jenda; \textit{Gorenstein injective and projective
modules}, Math. Z. 220 (1995), no. 4, 611-633.
\bibitem{Enocks and janda} E. E. Enochs and O. M. G. Jenda, \textit{Relative Homological Algebra}, de Gruyter Expositions in
Mathematics, Walter de Gruyter and Co., Berlin, 2000.
\bibitem{Eno Jenda Torrecillas}
E. Enochs, O. Jenda and B. Torrecillas; \textit{Gorenstein flat
modules}, Nanjing Daxue Xuebao Shuxue Bannian Kan 10 (1) (1993)
1-9.

\bibitem{character2}  D. J. Fieldhouse;  \textit{Character modules. dimension and purilY}.
GlasgowMath. J. 13. 144~ 146 (1972)
 \bibitem{Glaz} S. Glaz; \textit{Commutative Coherent Rings}, Springer-Verlag,
Lecture Notes in Mathematics, 1371 (1989).
\bibitem{Holm} H. Holm; \textit{Gorenstein homological dimensions}, J. Pure Appl.
Algebra 189 (2004), 167-193.

\bibitem{Kaplansky} I. Kaplansky; \textit{Commutative Rings}, Allyn and Bacon, Boston (1970).

\bibitem{Nicolson} W. K. Nicholson and M. F. Youssif; \textit{Quasi-Frobenius Rings}, Cambridge University
Press, vol. 158, 2003.
\bibitem{Rotman} J. Rotman; \textit{An Introduction to Homological Algebra},
Academic press, Pure and Appl. Math, A Series of Monographs and
Textbooks, 25 (1979).
 \bibitem{Stenstrom}
B. Stenstr$\ddot{o}$m; \textit{Coherent rings and FP-injective
module}, J. London Math. Soc. 2 (1970), 323- 329.



\end{thebibliography}

\end{document}